\def\BibTeX{{\rm B\kern-.05em{\sc i\kern-.025em b}\kern-.08em
		T\kern-.1667em\lower.7ex\hbox{E}\kern-.125emX}}
\DeclarePairedDelimiter{\abs}{\lvert}{\rvert}
\DeclarePairedDelimiter{\norm}{\lVert}{\rVert}
\DeclarePairedDelimiter{\Fronorm}{\lVert}{\rVert_{F}}
\DeclarePairedDelimiter{\Twonorm}{\lVert}{\rVert_{2}}
\newcommand{\invnb}[1]{#1^{-1}}
\newcommand{\trans}[1]{#1'}
\newcommand{\ctrans}[1]{#1^{*}}
\newcommand{\Linf}{\mathcal{L}^{\infty}}
\newcommand{\Cpi}{\mathcal{\tilde{C}}_{2\pi}}
\newtheorem{proposition}{Proposition}
\newtheorem{theorem}{Theorem}
\newtheorem{lemma}{Lemma}
\newtheorem{corollary}{Corollary}
\newtheorem{remark}{Remark}
\newtheorem{definition}{Definition}
\pgfplotsset{compat=1.13}
\begin{document}

\title{Horizon-independent Preconditioner Design for Linear Predictive Control}

\author{Ian~McInerney~\IEEEmembership{Student~Member,~IEEE}, Eric~C.~Kerrigan~\IEEEmembership{Senior~Member,~IEEE}, George~A.~Constantinides~\IEEEmembership{Senior~Member,~IEEE}
	\thanks{The support of the EPSRC Centre for Doctoral Training in High Performance Embedded and Distributed Systems (HiPEDS, Grant Reference EP/L016796/1) is gratefully acknowledged.}%
	\thanks{The authors are with the Department of Electrical \& Electronic Engineering, Imperial College London, SW7~2AZ, U.K. E.C.\ Kerrigan is also with the  Department of Aeronautics. email: \{i.mcinerney17, e.kerrigan, g.constantinides\}@imperial.ac.uk}}

\maketitle

\begin{abstract}
 First-order optimization solvers, such as the Fast Gradient Method, are increasingly being used to solve Model Predictive Control problems in resource-constrained environments.
 Unfortunately, the convergence rate of these solvers is significantly affected by the conditioning of the problem data, with ill-conditioned problems requiring a large number of iterations.
 To reduce the number of iterations required, we present a simple method for computing a horizon-independent preconditioning matrix for the Hessian of the condensed problem.
 The preconditioner is based on the block Toeplitz structure of the Hessian.
 Horizon-independence allows one to use only the predicted system and cost matrices to compute the preconditioner, instead of the full Hessian.
 The proposed preconditioner has equivalent performance to an optimal preconditioner in numerical examples, producing speedups between 2x and 9x for the Fast Gradient Method.
 Additionally, we derive horizon-independent spectral bounds for the Hessian in terms of the transfer function of the predicted system, and show how these can be used to compute a novel horizon-independent bound on the condition number for the preconditioned Hessian.
\end{abstract}

\begin{IEEEkeywords}
	model predictive control, optimal control, preconditioning, fast gradient method
\end{IEEEkeywords}

\section{Introduction}

Model Predictive Control (MPC) is an optimal control method that aims to optimize the closed-loop performance of a controlled system by solving an optimization problem at each sampling instant to compute the next control input.
 MPC is swiftly becoming a popular choice for the control of complicated systems with operational constraints, due to its explicit handling of constraints and recent advances in real-time optimization algorithms, allowing it to be deployed on resource-constrained systems such as internet-of-things devices \cite{Lucia2016_CPSperspectives}.

A common formulation of MPC is the Constrained Linear Quadratic Regulator (CLQR), which is an extension of the LQR to handle state and input constraints for a linear system with a quadratic objective function.
 The CLQR formulation gives an optimization problem that is a convex Quadratic Program (QP), which can then be solved efficiently using several types of iterative methods, such as interior-point methods, active-set methods and first-order methods.
 These methods are all affected by the conditioning of the problem, with poorly conditioned problems requiring more iterations to find the optimal solution.
 To overcome the problem's ill-conditioning and reduce the number of iterations, implementations utilize preconditioning techniques on the problem data for interior-point methods \cite{malyshevPreconditioningConjugateGradient2018}, active-set methods \cite{quirynenBlockStructuredPreconditioning2018}, and first-order methods \cite{Giselsson2014_precondFastDual, Richter2012_FGMcomplexity}.

We focus on first-order methods, which commonly use preconditioners generated by solving semidefinite programs (SDPs).
 One such SDP formulation is \cite{Richter2012_FGMcomplexity}, which minimizes the maximal eigenvalue of the preconditioned matrix by embedding the Hessian into a linear matrix inequality constraint.
 This SDP is readily solvable, but embedding the full Hessian into the constraints means the preconditioner must be recomputed if the horizon length changes and that the SDP problem size grows with the horizon length --- making the SDP preconditioner time consuming to compute for systems with long horizons and slowing down the control design process.

In this work, we present a new preconditioner for the condensed CLQR formulation, and show its ability to speed up convergence of the Fast Gradient Method (FGM) by up to 9x.
 This preconditioner is horizon-independent and is computed using only matrices with the number of states and inputs as their dimensions, but requires the terminal penalty matrix $P$ to be either the solution to the discrete Lyapunov equation for Schur-stable systems, or the discrete Ricatti equation for unstable systems.
 We show that our proposed preconditioner provides performance equivalent to an existing SDP preconditioner on several examples, while also providing a reduction in the computational effort required to compute the preconditioner.
 Additionally, we exploit the block Toeplitz structure of the CLQR problem's condensed Hessian to derive tight horizon-independent bounds on the extremal eigenvalues and condition number of both the original and preconditioned Hessians.

We begin in Section~\ref{sec:prelim} by introducing the CLQR problem formulation and the notation used in this paper.
 In Section~\ref{sec:spectralProperties}, we derive the theoretical framework for the computation of the bounds on the extremal eigenvalues and condition number of the Hessian.
 We propose our new preconditioner and extend the bounds to the preconditioned Hessian in Section~\ref{sec:precond}.
 Finally, we present numerical examples in Section~\ref{sec:comparisons} that compare the proposed preconditioner against existing preconditioners, and show its effect on the convergence rate of the Fast Gradient Method applied to three different systems.

\section{Preliminaries}
\label{sec:prelim}

\subsection{Notation}

$\trans{A}$ and $\ctrans{A}$ denote the transpose and conjugate-transpose of the matrix $A$, respectively.
 $A\otimes B$ represents the Kronecker product of matrix $A$ with matrix $B$.
 $\lambda_1 \leq \dots \leq \lambda_k$ are the real eigenvalues of a Hermitian matrix $A$ in sorted order, with the set of all eigenvalues denoted by $\lambda(A)$.
 The $p$-norm is denoted by $\norm{\circ}_p$, with $\Twonorm{A}$ the matrix spectral norm, and $\Fronorm{A}$ the Frobenius norm.
 The condition number of a matrix is $\kappa(A)\coloneqq\nobreak\Twonorm{A}\Twonorm{A^{-1}}$.
 The set $\mathbb{T} \coloneqq \{z \in \mathbb{C} : \abs{z} = 1 \}$ is the complex unit circle.
 For an infinite-dimensional block Toeplitz matrix $\mathbf{T}$ with blocks of size $m \times n$, $\mathcal{P}_{T}(\cdot) : \mathbb{T} \to \mathbb{C}^{m \times n}$ represents its matrix symbol and $T_{N}$ represents the truncated version of $\mathbf{T}$ after $N$ block diagonals (where $N$ is a positive integer).

We use the notation from \cite{zhouRobustOptimalControl1996} to represent the transfer function matrix of the system $\mathcal{G}$ with state space matrices $A, B, C, D$ as
 \begin{equation*}
   	\mathcal{G}(z) \coloneqq
   	\left[
   	\begin{array}{c|c}
   	A & B \\
   	\hline
   	C & D
   	\end{array}
	\right].
 \end{equation*}

$\Linf$ is the space of matrix-valued essentially bounded functions (i.e.\ matrix-valued functions that are measurable and have a finite Frobenius norm almost everywhere on their domain, see \cite[\S 2]{Miranda2000}).
 $\Cpi$ is the space of continuous $2\pi$-periodic functions inside $\Linf$.

\begin{definition}
	Let $\mathcal{P}_{T}(\cdot) \in \Cpi$ be a function that maps $\mathbb{T} \to \mathbb{C}^{n \times n}$, we define the extreme eigenvalues of $\mathcal{P}_{T}(\cdot)$ as
	\begin{align*}
	\lambda_{min}(\mathcal{P}_{T}) \coloneqq \underset{z \in \mathbb{T}}{\inf}~\lambda_1(\mathcal{P}_{T}(z)), \quad
	\lambda_{max}(\mathcal{P}_{T}) \coloneqq \underset{z \in \mathbb{T}}{\sup}~\lambda_n(\mathcal{P}_{T}(z)),
	\end{align*}
	and the condition number of $\mathcal{P}_{T}(\cdot)$ as
    \begin{equation*}
        \kappa(\mathcal{P}_{T}) \coloneqq \frac{\lambda_{max}(\mathcal{P}_{T})}{\lambda_{min}(\mathcal{P}_T)}.
    \end{equation*}
\end{definition}

\begin{definition}
	Let $T_{n}$ be the $n{\times}n$ truncation of the infinite matrix~$\mathbf{T}$. If the limits exist, we define the extrema of the spectrum of $\mathbf{T}$ as
	\begin{align*}
	\lambda_{min}(\mathbf{T}) &\coloneqq \lim_{n \to \infty}~\lambda_1(T_{n}), \quad
	\lambda_{max}(\mathbf{T}) \coloneqq \lim_{n \to \infty}~\lambda_n(T_{n}).
	\end{align*}
\end{definition}

\subsection{CLQR Formulation}
\label{sec:prelim:formulation}

In this work, we examine the Constrained Linear Quadratic Regulator (CLQR) formulation of the MPC problem, which can be written as the constrained QP
\begin{subequations}
    \label{eq:mpc:linMPC}
    \begin{align}
    \underset{u,x}{\text{min}}\   & \frac{1}{2} \trans{x_N} P x_N + \frac{1}{2} \sum_{k=0}^{N-1}
    \trans{ \begin{bmatrix} x_k\\ u_k \end{bmatrix} }
    \begin{bmatrix}
    Q & 0\\
    0 & R
    \end{bmatrix}
    \begin{bmatrix} x_k\\ u_k \end{bmatrix}
    \label{eq:mpc:lin:cost}\\
    \text{s.t.\ }  &
    \begin{aligned}[t]
    x_{k+1} &= A x_k + B u_k,\ k=0, \ldots, N-1 \label{eq:mpc:lin:dyn} \\
    x_{0} &= \hat{x}
    \end{aligned}\\
    & E_{u} u_k + E_{x} x_k\leq c,\ k=0, \ldots, N-1 \label{eq:mpc:lin:ineqCon}
    \end{align}
\end{subequations}
 where $N$ is the horizon length, $x_k \in \mathbb{R}^{n}$ are the states, and $u_k \in \mathbb{R}^{m}$ are the inputs at sample instant $k,$ and $\hat{x} \in \mathbb{R}^{n}$ is the current measured system state.
 $A \in \mathbb{R}^{n \times n}$ and $B \in \mathbb{R}^{n \times m}$ are the state-space matrices describing the discrete-time system with transfer function matrix
 \begin{equation*}
   	\mathcal{G}(z) \coloneqq
   	\left[
   	\begin{array}{c|c}
   	A & B \\
   	\hline
   	I & 0
   	\end{array}
	\right].
 \end{equation*}%
 $E_{u} \in \mathbb{R}^{l \times m}$ and $E_{x} \in \mathbb{R}^{l \times n}$ are the stage constraint coefficient matrices, and $c \in \mathbb{R}^{l}$ is the vector of bounds for the stage constraints.
 The matrices $Q = \trans{Q} \in \mathbb{R}^{n \times n}, R = \trans{R} \in \mathbb{R}^{m \times m}$ and $P=\trans{P} \in \mathbb{R}^{n \times n}$ are the weighting matrices for the system states, inputs and final states, respectively.
 The weighting matrices are chosen such that $P$, $Q$ and $R$ are positive definite.

This problem can be condensed by removing the state variables from~\eqref{eq:mpc:linMPC} to leave only the control inputs in the vector
$ u {\coloneqq} \trans{\begin{bmatrix}
\trans{u_0}~\trans{u_1}~\cdots~\trans{u_{N-1}}
\end{bmatrix}}$.
 The optimization problem is then
 \begin{subequations}
    \label{eq:mpc:condMPC}
    \begin{align}
    \underset{u}{\text{min}}\   & \frac{1}{2} \trans{u} H u + \trans{\hat{x}} \trans{\Phi} u\label{eq:mpc:cond:cost}\\
    \text{s.t.\ }  & G u \leq F \hat{x} + g \label{eq:mpc:cond:con}
    \end{align}
 \end{subequations}
 with
 $ H \coloneqq \trans{\Gamma} \bar{Q} \Gamma + \bar{R} $,
 $ \bar{R} \coloneqq I_{N} \otimes R$,
 $ \bar{Q} \coloneqq \begin{bmatrix}
     I_{N-1} \otimes Q & 0\\
     0 & P
     \end{bmatrix},
 $
 \begin{equation*}
     \Gamma \coloneqq \begin{bmatrix}
      B & 0 & 0 & & 0\\
      AB & B & 0 & & 0\\
      A^2B & AB & B & & 0 \\
      \vdots & & & \ddots & \vdots\\
      A^{N-1}B & A^{N-2}B & A^{N-3}B & \cdots & B
     \end{bmatrix}.
 \end{equation*}

The terminal weight matrix $P$ in~\eqref{eq:mpc:linMPC} can be crucial to the stability and performance of the closed-loop controller.
 The simplest choice is to set $P = Q$, so that final states are weighted the same as other states.
 This allows for simple formation of the problem matrices, but does not provide stability guarantees for the closed-loop problem.

Instead, a possible choice for $P$ is to either choose it to be the solution to the Discrete Algebraic Riccati Equation (DARE)
 \begin{equation}
	\label{eq:dare}
	P = \trans{A} P A + Q - \trans{A} P B \invnb{ (\trans{B} P B + R) } \trans{B} P A,
 \end{equation}
 or choose $P$ to be the solution to the discrete Lyapunov equation
 \begin{equation}
    \label{eq:dlyap}
    \trans{A} P A + Q = P,
 \end{equation}
 where $Q$ and $R$ are the cost matrices from Problem~\eqref{eq:mpc:linMPC}, and $A$ and~$B$ are the system matrices.
 Choosing $P$ in these ways approximates the cost function value for the time after the horizon ends, and allows for the derivation of closed-loop stability guarantees for the controller~\cite{Mayne2000_StabilitySurvey}.

\subsection{Numerically Robust CLQR Formulation}
\label{sec:numericallyRobust}

The condensed formulation~\eqref{eq:mpc:condMPC} with an unstable system can become numerically unstable as the horizon length increases, since unstable systems have $\abs{\lambda_{max}(A)} > 1$, taking repeated powers of $A$ to form~$\Gamma$ then causes the condition number of $H$ to be unbounded.

To overcome this, a modification to \eqref{eq:mpc:condMPC} was proposed in \cite{Rossiter1998_prestabilization} where instead of using the actual system $A$ for computing the prediction matrix and optimal control, a prestabilized system $A - BK$ is used instead.
 This prestabilized system would guarantee that the prediction matrix entries do not grow unbounded with the horizon length, leading to better conditioning of the optimization problem.
 Note that this transformation requires the pair $(A,B)$ to be stabilizable.

To formulate the prestabilized problem, a new system $\mathcal{G}_{c}$ is formed by setting $u_{k} = -Kx_{k} + v_{k}$ where $v \in \mathbb{R}^{m}$ is a new input so that
\begin{equation}
	\label{eq:system:gk}
   	\mathcal{G}_{c}(z) \coloneqq
   	\left[
   	\begin{array}{c|c}
   	A - BK & B \\
   	\hline
   	I & 0
   	\end{array}
	\right],
 \end{equation}
 where the controller $K$ is chosen so that $\abs{\lambda_{max}(A - BK)} < 1$.
 There are several ways to compute a $K$ that meets this criterion, however we focus on $K$ chosen as the unconstrained infinite-horizon LQR controller computed using $Q$ and $R$ from \eqref{eq:mpc:lin:cost}.

The computations are then done using the input space
$v {\coloneqq} \trans{\begin{bmatrix}
\trans{v_0}~\trans{v_1}~\cdots~\trans{v_{N-1}}
\end{bmatrix}}$,
turning the condensed QP~\eqref{eq:mpc:condMPC} into
 \begin{subequations}
	\label{eq:mpc:stablecondMPC}
	\begin{align}
	v^{*} \coloneqq
	\underset{v}{\text{argmin}}\   & \frac{1}{2} \trans{v} H_{c} v + \trans{\hat{x}} \trans{\Phi}_{c} v\label{eq:mpc:stable:cost}\\
	\text{s.t.\ }  & G_c v \leq F_c \hat{x} + g \label{eq:mpc:stable:con}
	\end{align}
 \end{subequations}
 with
 $ A_{c} = A - BK$,
 $ Q_{c} = Q + \trans{K} R K $,
 $\bar{K} = I_{N} \otimes -K$,
 \begin{equation*}
     H_{c} \coloneqq \trans{\Gamma_{c}} \bar{Q}_{c} \Gamma_{c} + \trans{\Gamma_{c}}\trans{\bar{K}}\bar{R} + \bar{R}\bar{K}\Gamma_{c} + \bar{R},
 \end{equation*}
 $$\bar{Q}_{c} \coloneqq \begin{bmatrix}
   I_{N-1} \otimes Q_{c} & 0\\
   0 & P
   \end{bmatrix},
 $$
 \begin{equation}
	\Gamma_{c} \coloneqq \begin{bmatrix}
	B & 0 & 0 & \cdots & 0\\
	A_{c}B & B & 0 & & 0\\
	A_{c}^2B & A_{c}B & B & & 0 \\
	\vdots & & & \ddots & \vdots\\
	A_{c}^{N-1}B & A_{c}^{N-2}B & A_{c}^{N-3}B & \cdots & B
	\end{bmatrix}.
	\label{eq:mat:stableGamma}
 \end{equation}
 The input applied to the original system is then $u_{0} = -K\hat{x} + v^{*}_{0}(\hat{x})$.
 The constraint matrices $F_{c}$ and $G_{c}$ in Problem~\eqref{eq:mpc:stablecondMPC} are formed by modifying $F$ and $G$ from Problem~\eqref{eq:mpc:condMPC} to use the prestabilized system, with full details given in \cite{Rossiter1998_prestabilization}.

\section{Spectral Properties}
\label{sec:spectralProperties}

In order to effectively analyze and derive our closed-form preconditioner, we first derive some spectral properties for the Hessian and prediction matrix in the CLQR problem.
 Similar results were reported in \cite{Rojas2004_HessAsymp} and \cite[Sect.~11]{Goodwin2005}, but our analysis differs in several ways.
 Specifically, the prior work requires that the system $\mathcal{G}$ have no eigenvalues on the unit circle, and that it be transformed into a system with separable stable and unstable modes so that they can be handled separately.
 Our analysis uses the numerically robust formulation~\eqref{eq:mpc:stablecondMPC} to support eigenvalues  of $\mathcal{G}$ on the unit circle and both its stable and unstable modes at the same time.
 Additionally, the prior work assumes $Q = \trans{C}C$ (with $C$ the output state-mapping matrix of $\mathcal{G}$), whereas our analysis allows for an arbitrary positive-definite $Q$ and a $P$ matrix chosen as either $Q$ or the solution to the DARE.

\subsection{Matrix Symbol for the Prediction Matrix}

We start by analyzing the prediction matrix $\Gamma_{c}$ and note that its diagonals are constant blocks, which means that $\Gamma_{c}$ is a block Toeplitz matrix.
 Finite-dimensional block Toeplitz matrices with blocks of size $m \times n$ can be viewed as the truncation of an infinite-dimensional block Toeplitz matrix by extending the block pattern.
 The infinite-dimensional matrix is represented by a matrix-valued function mapping $\mathbb{T} \to \mathbb{C}^{m \times n}$, called the matrix symbol, which is used for analyzing properties of the matrix.

The blocks on the diagonals of the matrix give the spectral coefficients of the matrix symbol, so the symbol can be represented as a Fourier series with the coefficients given by the matrix blocks.
 For the original prediction matrix $\Gamma$, the truncated Fourier series that only uses the blocks in the horizon is given by
 \begin{equation*}
     \sum_{i=0}^{N-1} A^i B z^{-i}, \quad \forall z \in \mathbb{T}.
 \end{equation*}
 As the horizon length increases, this series is only guaranteed to converge to a finite matrix symbol when the system $\mathcal{G}$ with matrices $A$ and $B$ is Schur-stable.

To form a convergent Fourier series, we use the numerically robust CLQR formulation from Section~\ref{sec:numericallyRobust} to introduce a stabilizing linear state-feedback controller $u_{k} = -Kx_{k} + v_{k}$ to the prediction.
 For systems where the pair $(A,B)$ is stabilizable, this then leads to a convergent Fourier series for the prediction matrix $\Gamma_{c}$ of the new controlled system, and the finite matrix symbol given in Lemma~\ref{lem:gammaSymbolControlled}.

\begin{lemma}
    \label{lem:gammaSymbolControlled}
    Let the pair $(A,B)$ be stabilizable and $K \in \mathbb{R}^{m \times n}$ be a linear state-feedback control matrix used to form the prestabilized system~\eqref{eq:system:gk}.
    The prediction matrix $\Gamma_{c}$ then has the matrix symbol $\mathcal{P}_{\Gamma_{c}} \in \Cpi$ with
    \begin{equation*}
        \label{eq:pred:genFunc}
        \mathcal{P}_{\Gamma_{c}}(z) \coloneqq z(zI - (A-BK))^{-1}B = z \mathcal{G}_{c}(z),
        \quad
        \forall z \in \mathbb{T},
    \end{equation*}
    where $\mathcal{G}_c(\cdot)$ is the transfer function matrix for the system~$\mathcal{G}_c$.
\end{lemma}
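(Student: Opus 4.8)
The plan is to exploit the lower-triangular block Toeplitz structure of $\Gamma_c$ to read off its defining symbol as a Fourier series, then to sum that series in closed form via the Neumann series and identify the result with $z\mathcal{G}_c(z)$. Concretely, from \eqref{eq:mat:stableGamma} the block in row-block $j$, column-block $k$ of $\Gamma_c$ equals $A_c^{\,j-k}B$ when $j \ge k$ and $0$ otherwise, where $A_c = A - BK$; hence $\Gamma_c$ is the $N$-diagonal truncation $T_N$ of the infinite lower-triangular block Toeplitz matrix $\mathbf{\Gamma}$ whose $i$-th subdiagonal block is $A_c^{\,i}B$ (the main diagonal being $i=0$). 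By the notion of matrix symbol recalled before the lemma, and consistently with the truncated polynomial \eqref{eq:predMat:truncatedTrigPoly} for the uncontrolled case, the candidate symbol is the $\mathbb{C}^{n\times m}$-valued Fourier series $\mathcal{P}_{\Gamma_c}(z) = \sum_{i=0}^{\infty} A_c^{\,i} B z^{-i}$ on $\mathbb{T}$.

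The only genuine analytic point is to check that this series really converges, uniformly on $\mathbb{T}$, to a continuous $2\pi$-periodic function, so that $\mathcal{P}_{\Gamma_c} \in \Cpi \subset \Linf$. Because $K$ is chosen so that $\abs{\lambda_{max}(A - BK)} < 1$, the spectral radius $\rho(A_c) < 1$; by Gelfand's formula (or by passing to a submultiplicative norm in which $\norm{A_c} < 1$) there exist $C \ge 1$ and $\rho \in (0,1)$ with $\Twonorm{A_c^{\,i}} \le C\rho^{\,i}$ for all $i$. On $\mathbb{T}$ we then have $\Twonorm{A_c^{\,i} B z^{-i}} \le C\Twonorm{B}\rho^{\,i}$, so the Weierstrass $M$-test yields absolute and uniform convergence; the partial sums are trigonometric polynomials, hence continuous and $2\pi$-periodic, and these properties pass to the uniform limit. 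This also confirms that the Fourier coefficients of the limit are exactly the block diagonals $A_c^{\,i}B$ of $\mathbf{\Gamma}$, so the limit genuinely is the symbol generating $\mathbf{\Gamma}$ (and therefore $\Gamma_c = T_N$).

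To evaluate the sum, note that for $\abs{z}=1$ the spectral radius of $z^{-1}A_c$ is $\rho(A_c) < 1$, so the Neumann series gives $\sum_{i=0}^{\infty}(z^{-1}A_c)^i = (I - z^{-1}A_c)^{-1}$. Since $zI - A_c = z(I - z^{-1}A_c)$, we have $(I - z^{-1}A_c)^{-1} = z(zI - A_c)^{-1}$, and therefore
\begin{equation*}
\mathcal{P}_{\Gamma_c}(z) = \Bigl(\sum_{i=0}^{\infty}(z^{-1}A_c)^i\Bigr)B = z(zI - A_c)^{-1}B = z\bigl(zI - (A - BK)\bigr)^{-1}B, \qquad \forall z \in \mathbb{T}.
\end{equation*}
Finally, recognizing that $\mathcal{G}_c$ in \eqref{eq:system:gk} is the realization with state matrix $A_c = A - BK$, input matrix $B$, output the state, and no direct feedthrough, its transfer function matrix is $\mathcal{G}_c(z) = (zI - (A-BK))^{-1}B$, which yields $\mathcal{P}_{\Gamma_c}(z) = z\mathcal{G}_c(z)$ and completes the proof.

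I expect the bookkeeping of the Toeplitz index convention — matching the coefficient $z^{-i}$ to the $i$-th subdiagonal block $A_c^{\,i}B$, consistent with \eqref{eq:predMat:truncatedTrigPoly} — together with the uniform-convergence estimate to be the only places requiring care; the remaining steps are a direct Neumann-series computation and the identification of the transfer function.
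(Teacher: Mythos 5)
Your proposal is correct and follows essentially the same route as the paper's own proof: read off the subdiagonal blocks $A_c^{\,i}B$ as the Fourier coefficients of the symbol of the infinite extension $\mathbf{\Gamma}$, sum the resulting series as a Neumann series using Schur-stability of $A-BK$, and identify the limit with $z\mathcal{G}_c(z)$. Your treatment of convergence (Gelfand's formula plus the Weierstrass $M$-test) is a more explicit version of the paper's appeal to absolute summability and the Wiener class, but the argument is the same.
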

\begin{proof}
    The matrix symbol for the block Toeplitz matrix $\Gamma_{c}$ is derived using the infinite-dimensional extension of $\Gamma_{c}$, called $\mathbf{\Gamma}$.
    The blocks that make up $\mathbf{\Gamma}$ can be extrapolated from \eqref{eq:mat:stableGamma} to
    \begin{equation}
    	\label{eq:gammaInfiniteBlocks}
    	\mathbf{\Gamma}_{i} \coloneqq \begin{cases}
    	0 & \text{if $i < 0$}, \\
    	(A-BK)^{i} B &  \text{if $i \geq 0$},
    	\end{cases}
    \end{equation}
    where $i$ is the number of the block diagonal of the matrix, with $0$ being the main diagonal and positive $i$ below the main diagonal.
    A possible way to form the matrix symbol of a block Toeplitz matrix is to define it as the trigonometric polynomial with the blocks of the matrix as coefficients \cite[\S 4.3]{Gutierrez-Gutierrez2012_blockSurvey}.
    Doing that for $\mathbf{\Gamma}$ uses the blocks~\eqref{eq:gammaInfiniteBlocks} as the coefficients to form the trigonometric polynomial
    \begin{equation}
        \label{eq:gammaPolySum}
    	\sum_{i=0}^{\infty} z^{-i} (A-BK)^{i} B.
    \end{equation}
    The constant $B$ matrix can be extracted from the summation, giving
    \begin{equation*}
  	    \left( \sum_{i=0}^{\infty} (A-BK)^i z^{-i} \right) B.
    \end{equation*}
	Since $K$ was designed to make $(A-BK)$ Schur-stable, the summation becomes a convergent Neumann series that converges to
    $z( zI - (A - BK) )^{-1}$ \cite[\S 3.4]{Peterson2012}.
    With the $B$ matrix right-multiplying the summation, the result is the transfer function matrix for the time-shifted system $z\mathcal{G}_{c}(z)$, giving the matrix symbol in the lemma.
    Finally, note that the coefficients in the sum~\eqref{eq:gammaPolySum} are absolutely summable, so $\mathcal{P}_{\Gamma_{c}}$ is in the Wiener class, meaning that $\mathcal{P}_{\Gamma_{c}} \in \Linf$ and is continuous and $2\pi$-periodic, leading to $\mathcal{P}_{\Gamma_{c}} \in \Cpi$.
\end{proof}

It is tempting to only apply the stabilizing controller $K$ after the horizon, like the CLQR stability theory in \cite{Mayne2000_StabilitySurvey}, but this will cause $\mathbf{\Gamma}$ to have $A^{i}B$ in the upper-left $N \times N$ block submatrix and $(A - BK)^{i}B$ in the remaining part, breaking the block Toeplitz structure.

At this point, there is no restriction on the type of linear state-feedback controller used to prestabilize the system --- any controller that results in a Schur-stable closed-loop system can be used.
 A convenient choice for $K$ is the infinite-horizon LQR controller designed using the cost matrices in \eqref{eq:mpc:lin:cost} with $P$ chosen to be the solution to the DARE~\eqref{eq:dare} as described in Section~\ref{sec:prelim:formulation}.

If the system $\mathcal{G}$ is Schur-stable to begin with, then there is no need for a prestabilizing controller $K$ and the matrices $\Gamma$ and $\Gamma_{c}$ can be the same. The matrix symbol for $\Gamma_{c}$ can then be simplified as shown in Corollary~\ref{cor:gammaSymbolSchurStable}.
\begin{corollary}
	\label{cor:gammaSymbolSchurStable}
	If the system $\mathcal{G}$ is Schur-stable, then with $K = 0$ the prediction matrix $\Gamma$ has a convergent Fourier series, producing the matrix symbol $\mathcal{P}_{\Gamma} \in \Cpi$ with
	\begin{equation*}
	\label{eq:pred:genFuncSchur}
	\mathcal{P}_{\Gamma}(z) \coloneqq z(zI - A)^{-1}B = z \mathcal{G}(z),
	\qquad
	\forall z \in \mathbb{T},
	\end{equation*}
	where $\mathcal{G}(\cdot)$ is the transfer function matrix for the system~$\mathcal{G}$.
\end{corollary}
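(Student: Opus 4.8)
The plan is to recognise that this corollary is nothing more than the specialisation of Lemma~\ref{lem:gammaSymbolControlled} to $K = 0$, and the only thing that needs checking is that $K = 0$ is an admissible choice of pre-stabilising feedback under the stated hypothesis. First I would observe that Schur-stability of $\mathcal{G}$ means $\abs{\lambda_{max}(A)} < 1$, so the ``pre-stabilised'' system \eqref{eq:system:gk} with $K = 0$ has $A - BK = A$ already Schur-stable; consequently $\Gamma_c = \Gamma$ and $\mathcal{G}_c = \mathcal{G}$, and Lemma~\ref{lem:gammaSymbolControlled} applies verbatim.

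For completeness I would then re-run the argument of that lemma with $A - BK$ replaced by $A$. The infinite-dimensional block Toeplitz extension $\mathbf{\Gamma}$ of $\Gamma$ has spectral coefficients $\tilde{\Gamma}_i = A^i B$ for $i \geq 1$ and zero blocks above the main diagonal, so its Fourier series is $\sum_{i=0}^{\infty} z^{-i} A^i B$. Pulling the constant $B$ out on the right, the remaining sum $\sum_{i=0}^{\infty} (Az^{-1})^i$ is a Neumann series which, for every $z \in \mathbb{T}$, converges because $\abs{z} = 1$ and the spectral radius of $A$ is strictly less than one; it sums to $z(zI - A)^{-1}$, and right-multiplication by $B$ yields $z(zI-A)^{-1}B = z\mathcal{G}(z)$.

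Finally I would confirm the regularity statement: since the coefficients $A^i B$ decay geometrically they are absolutely summable, so $\mathcal{P}_\Gamma$ lies in the Wiener class, hence $\mathcal{P}_\Gamma \in \Linf$ and is continuous and $2\pi$-periodic, i.e.\ $\mathcal{P}_\Gamma \in \Cpi$. There is no real obstacle here; the only point deserving explicit mention is that $K = 0$ qualifies as a stabilising feedback exactly when $\mathcal{G}$ is Schur-stable, which is what lets us invoke Lemma~\ref{lem:gammaSymbolControlled} directly instead of repeating its proof.
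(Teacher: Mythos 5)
Your proposal is correct and matches the paper's treatment: the corollary is stated without a separate proof precisely because it is the specialisation of Lemma~\ref{lem:gammaSymbolControlled} to $K=0$, which is admissible exactly when $A$ is already Schur-stable, and your re-derivation of the Neumann series $\sum_{i=0}^{\infty}(Az^{-1})^{i} = z(zI-A)^{-1}$ and the Wiener-class argument for $\mathcal{P}_{\Gamma}\in\Cpi$ faithfully reproduces the lemma's proof with $A-BK$ replaced by $A$.
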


\subsection{Matrix Symbol for the Hessian}
\label{sec:specRes:condHess}

The Hessian of the MPC problem formulation in~\eqref{eq:mpc:stablecondMPC} can be split into four distinct parts
 \begin{equation}
    \label{eq:dense:primalHessian:MatrixSplitting}
     H_{c} \coloneqq H_Q + H_P + H_{K} + H_R
 \end{equation}
 where $H_Q$, $H_R$, $H_P$, and $H_{K}$ are the parts that contain the matrices $Q$, $\bar{R}$, $P$, and the $\bar{K}$ cross-terms, respectively.
 Slightly different analysis must be done depending on the choice of $P$, and in this work we focus on the cases when $P=Q$ and $P$ is the solution to the DARE for the infinite-dimensional unconstrained LQR of problem.

\subsubsection{$P$ is the same as $Q$}

Choosing $P = Q$ for \eqref{eq:mpc:linMPC} allows the term $H_{P}$ to be consolidated into $H_{Q}$, giving
\begin{align*}
	H_{c} &= \trans{\Gamma_{c}} \bar{Q}_{c} \Gamma_{c} + \trans{\Gamma_{c}}\trans{\bar{K}}\bar{R} + \bar{R}\bar{K}\Gamma_{c} + \bar{R},\\
	\bar{Q}_{c} &= I_{N} \otimes (Q + \trans{K} R K).
\end{align*}
 Analysis of the resulting matrix $H_{c}$ reveals that $\mathbf{H}_{c}$ is also block Toeplitz with the matrix symbol given in Lemma~\ref{lem:HqSymbol}.
\begin{lemma}
    \label{lem:HqSymbol}
    Let $P=Q$ and $\mathcal{P}_{\Gamma_{c}}$ be the matrix symbol for $\Gamma_{c}$ from Lemma~\ref{lem:gammaSymbolControlled}. The matrix $\mathbf{H}_{c}$ is then a block Toeplitz matrix with the matrix symbol $\mathcal{P}_{H_{c}} \in \Cpi$, where
    \begin{multline*}
        \label{eq:dense:PQ:Hq:genFunc}
        \mathcal{P}_{H_{c}}(z) \coloneqq \ctrans{\mathcal{P}}_{\Gamma_{c}}(z) (Q + \trans{K} R K) \mathcal{P}_{\Gamma_{c}}(z) + R\\
        - \ctrans{\mathcal{P}}_{\Gamma_{c}}(z) \trans{K} R - R K \mathcal{P}_{\Gamma_{c}}(z), \quad
        \forall z \in \mathbb{T}.
    \end{multline*}
\end{lemma}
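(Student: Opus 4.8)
The plan is to mirror the proof of Lemma~\ref{lem:gammaSymbolControlled}: replace the finite Hessian by its infinite-dimensional block Toeplitz extension, read off the symbol from the block Toeplitz operator algebra together with the already-known symbol $\mathcal{P}_{\Gamma_{c}}$, and finally check membership in $\Cpi$. Write $A_{c}\coloneqq A-BK$ and $Q_{c}\coloneqq Q+\trans{K}RK$. By~\eqref{eq:pq:hessian} and~\eqref{eq:pq:qmat} we have $H_{c} = \ctrans{\Gamma_{c}}(I_{N}\otimes Q_{c})\Gamma_{c} + (I_{N}\otimes R)$, so $H_{c}$ is assembled from the truncated lower-triangular block Toeplitz matrix $\Gamma_{c}$ of Lemma~\ref{lem:gammaSymbolControlled} together with two constant block-diagonal weights, which are themselves block Toeplitz with the constant symbols $Q_{c}$ and $R$. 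Let $\mathbf{\Gamma}$ be the infinite extension of $\Gamma_{c}$ built in the proof of Lemma~\ref{lem:gammaSymbolControlled} (a bounded operator, since $\mathcal{P}_{\Gamma_{c}}\in\Linf$), and set $\mathbf{H}_{c}\coloneqq\ctrans{\mathbf{\Gamma}}(\mathbf{I}\otimes Q_{c})\mathbf{\Gamma}+(\mathbf{I}\otimes R)$.

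First I would show that $\mathbf{H}_{c}$ is block Toeplitz and identify its symbol. Writing $G_{i}\coloneqq A_{c}^{i}B$ for the $i$-th subdiagonal block of $\mathbf{\Gamma}$, a direct computation of the $(j,k)$ block of $\ctrans{\mathbf{\Gamma}}(\mathbf{I}\otimes Q_{c})\mathbf{\Gamma}$ gives $\sum_{i\geq 0}\ctrans{G_{i+(k-j)}}Q_{c}G_{i}$ for $k\geq j$ (and the conjugate transpose of the $(k,j)$ block when $k<j$), which depends on $j$ and $k$ only through $k-j$; this is exactly where the lower-triangularity of $\mathbf{\Gamma}$ is used, being the block-Toeplitz analogue of the fact that a product of Toeplitz operators is again Toeplitz whenever one factor is analytic or the other co-analytic, the corner (Hankel) corrections vanishing here. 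Hence $\mathbf{H}_{c}$ is block Toeplitz, and since this product of three block Toeplitz operators is again block Toeplitz its symbol is the product of the three symbols; using $\bar z = z^{-1}$ on $\mathbb{T}$, the symbol of $\ctrans{\mathbf{\Gamma}}$ is $\ctrans{\mathcal{P}_{\Gamma_{c}}}(z)=\ctrans{B}(I-z\ctrans{A_{c}})^{-1}$, and substituting $\mathcal{P}_{\Gamma_{c}}$ from Lemma~\ref{lem:gammaSymbolControlled} gives exactly $\mathcal{P}_{H_{c}}(z)=\ctrans{\mathcal{P}_{\Gamma_{c}}}(z)\,(Q+\trans{K}RK)\,\mathcal{P}_{\Gamma_{c}}(z)+R$. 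For the $\Cpi$ claim I would note that $\mathcal{P}_{H_{c}}(z)=\ctrans{B}(I-z\ctrans{A_{c}})^{-1}Q_{c}\,z(zI-A_{c})^{-1}B+R$ is a rational matrix-valued function whose only poles are the eigenvalues of $A_{c}$ and their reciprocals, all strictly off $\mathbb{T}$ since $A_{c}$ is Schur-stable, so $\mathcal{P}_{H_{c}}$ is continuous, bounded and $2\pi$-periodic on $\mathbb{T}$; equivalently, the Fourier blocks $\sum_{i\geq 0}\ctrans{G_{i+d}}Q_{c}G_{i}$ decay geometrically in $\abs{d}$ at the rate of the spectral radius of $A_{c}$, hence are absolutely summable, placing $\mathcal{P}_{H_{c}}$ in the Wiener class and therefore in $\Cpi$, exactly as at the end of the proof of Lemma~\ref{lem:gammaSymbolControlled}. (As a sanity check, $Q_{c}\succ 0$ and $R\succ 0$ force $\mathcal{P}_{H_{c}}(z)\succ 0$ for every $z\in\mathbb{T}$, consistent with $H_{c}$ being positive definite.)

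The step I expect to require the most care is relating the finite $H_{c}$ to the symbol $\mathcal{P}_{H_{c}}$ of the infinite operator. Although $\ctrans{\mathbf{\Gamma}}(\mathbf{I}\otimes Q_{c})\mathbf{\Gamma}$ is exactly block Toeplitz, the finite matrix $H_{c}=\ctrans{\Gamma_{c}}(I_{N}\otimes Q_{c})\Gamma_{c}+(I_{N}\otimes R)$ is not: it differs from the $N$-block truncation of $\mathbf{H}_{c}$ by a correction localized near the last block diagonal, arising from the states predicted beyond the horizon that the truncated $\Gamma_{c}$ discards. Because $A_{c}$ is Schur-stable this correction is bounded in spectral norm uniformly in $N$ and vanishes in the normalized Frobenius norm, so $H_{c}$ is asymptotically equivalent to the truncation of $\mathbf{H}_{c}$ and therefore shares its limiting spectrum and its symbol in the sense used in Section~\ref{sec:spectralProperties}; pinning this down is the only delicate point, whereas the symbol arithmetic and the Wiener-class bound are routine given Lemma~\ref{lem:gammaSymbolControlled}.
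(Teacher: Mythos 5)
Your symbol computation takes the same route as the paper: the paper's proof notes that $\bar{R}$ and $\bar{Q}_{c}$ are block Toeplitz with the constant symbols $R$ and $Q+\trans{K}RK$, invokes a result from the block-Toeplitz literature to conclude that the product $\ctrans{\Gamma_{c}}\bar{Q}_{c}\Gamma_{c}$ of an upper-triangular (co-analytic), a constant, and a lower-triangular (analytic) factor is again block Toeplitz with the product symbol, and closes by additivity. Your explicit computation of the $(j,k)$ block $\sum_{i\geq 0}\ctrans{G_{i+(k-j)}}Q_{c}G_{i}$, the identification $\ctrans{\mathcal{P}_{\Gamma_{c}}}(z)=\ctrans{B}(I-z\ctrans{A_{c}})^{-1}$, and the Wiener-class check are a self-contained version of exactly that argument, so the core of your proof matches the paper's.

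Where you genuinely go beyond the paper is your final paragraph. The product identity is exact only for the semi-infinite operators: the finite matrix $\ctrans{\Gamma_{c}}(I_{N}\otimes Q_{c})\Gamma_{c}+I_{N}\otimes R$ has $j$-th diagonal block $\sum_{i=0}^{N-j}\trans{B}\trans{(A_{c}^{i})}Q_{c}A_{c}^{i}B+R$, which varies with $j$ (compare the main diagonal of \eqref{mat:HQ}), so with $P=Q$ the finite $H_{c}$ is not literally block Toeplitz; it differs from the $N$-block truncation of the symbol by a positive-semidefinite correction concentrated in the trailing blocks, coming from the discarded post-horizon states. The paper's proof of Lemma~\ref{lem:HqSymbol} is silent on this, and only Proposition~\ref{prop:Hpsymbol} resolves it exactly, by showing that the DARE choice of $P$ completes every partial sum to its infinite limit so that $H_{c}$ becomes exactly block Toeplitz. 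Your resolution for the $P=Q$ case---the correction is uniformly bounded in spectral norm and vanishes in normalized Frobenius norm by Schur stability of $A-BK$, so $H_{c}$ is asymptotically equivalent to the truncation and inherits the limiting spectrum---is the right way to make the lemma's spectral consequences in Section~\ref{sec:spectralProperties} rigorous, and is more careful than the paper on this one delicate point. No gap on your side.
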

\begin{proof}
    Using the assumption that $P = Q$, we can see that the new state weighting matrix $\bar{Q}_{c}$ is block diagonal with the same entry in each block, making $\bar{Q}_{c}$ a block Toeplitz matrix with the symbol $\mathcal{P}_{\bar{Q}_{c}}(z) \coloneqq Q + \trans{K} R K$.
    Since $\mathbf{\Gamma}_{c}$ is a lower-triangular block matrix and $\trans{\mathbf{\Gamma}_{c}}$ is an upper-triangular block matrix, the product $\trans{\mathbf{\Gamma}_{c}} \bar{\mathbf{Q}} \mathbf{\Gamma}_{c}$ is block Toeplitz with matrix symbol
    $
    \ctrans{\mathcal{P}_{\Gamma_{c}}} \mathcal{P}_{\bar{Q}_{c}} \mathcal{P}_{\Gamma_{c}}
    $
    \cite[Lemma 4.5]{Gutierrez-Gutierrez2012_blockSurvey}.

    By construction, $\mathbf{\bar{R}}$ and $\mathbf{\bar{K}}$ are both block Toeplitz matrices with symbols $\mathcal{P}_{\bar{R}}(z) \coloneqq R$ and  $\mathcal{P}_{\bar{K}}(z) \coloneqq -K$, respectively.
    $\trans{\mathbf{\Gamma}_{c}} \trans{\mathbf{\bar{K}}} \mathbf{\bar{R}}$ and $\mathbf{\bar{R}} \mathbf{\bar{K}} \mathbf{\Gamma}_{c}$ are then block Toeplitz, since the product $\mathbf{\bar{R}} \mathbf{\bar{K}}$ produces a block diagonal block Toeplitz matrix, preserving the block Toeplitz structure of $\mathbf{\Gamma}_{c}$ during the multiplication.
    The matrix symbols for $\trans{\mathbf{\Gamma}_{c}} \trans{\mathbf{\bar{K}}} \mathbf{\bar{R}}$ and $\mathbf{\bar{R}} \mathbf{\bar{K}} \mathbf{\Gamma}_{c}$ are then $- \ctrans{\mathcal{P}}_{\Gamma_{c}}(z) \trans{K} R$ and $- R K \mathcal{P}_{\Gamma_{c}}(z)$, respectively.
    Block Toeplitz structure is preserved over the addition of two or more block Toeplitz matrices, and the matrix symbol for the resulting sum is the sum of the original matrix symbols, giving the matrix symbol in the lemma.
\end{proof}

It is important to note that the product of block Toeplitz matrices is not guaranteed to be block Toeplitz except in certain special cases, while the addition of multiple block Toeplitz matrices with compatible block sizes is always guaranteed to produce a block Toeplitz result.
 In this case, the lower-triangular structure of $\mathbf{\Gamma}_{c}$ and the block Toeplitz structure of $\bar{\mathbf{Q}}_{c}$ implies that the product $\trans{\mathbf{\Gamma}_{c}} \bar{\mathbf{Q}} \mathbf{\Gamma}_{c}$ is one of the special cases where the multiplication of the three block Toeplitz matrices of compatible block sizes is block Toeplitz.
 Choosing an arbitrary $P$ with $P \neq Q$ will cause $\bar{Q}_{c}$ to no longer be block Toeplitz, so the multiplication will not necessarily produce a block Toeplitz matrix and the Hessian may not be block Toeplitz.

\subsubsection{$P$ is the solution to the DARE}

\begin{figure*}[t!]
	\begin{equation}
	\label{mat:HQ}
	H_{Q} \coloneqq
	\begin{bmatrix}
	\sum_{i=0}^{N-1} \trans{B} \trans{(A_c^{i})} Q_c A_c^{i} B   & \sum_{i=0}^{N-2} \trans{B} \trans{(A_c^{i+1})} Q_c A_c^{i} B & \sum_{i=0}^{N-3} \trans{B} \trans{(A_c^{i+2})} Q_c A_c^{i} B & \cdots & \trans{B} \trans{(A_c^{N-1})} Q_c B\\
	\sum_{i=0}^{N-2} \trans{B} \trans{(A_c^{i})} Q_c A_c^{i+1} B & \sum_{i=0}^{N-2} \trans{B} \trans{(A_c^{i})} Q_c A_c^{i} B   & \sum_{i=0}^{N-3} \trans{B} \trans{(A_c^{i+1})} Q_c A_c^{i} B & \cdots & \trans{B} \trans{(A_c^{N-2})} Q_c B\\
	\sum_{i=0}^{N-3} \trans{B} \trans{(A_c^{i})} Q_c A_c^{i+2} B & \sum_{i=0}^{N-3} \trans{B} \trans{(A_c^{i})} Q_c A_c^{i+1} B & \sum_{i=0}^{N-3} \trans{B} \trans{(A_c^{i})} Q_c A_c^{i} B   & \cdots & \trans{B} \trans{(A_c^{N-3})} Q_c B\\
	\vdots & \vdots & \vdots & \ddots & \vdots \\
	\trans{B} Q_c A_c^{N-1} B & \trans{B} Q_c A_c^{N-2} B & \trans{B} Q_c A_c^{N-3} B & \cdots & \trans{B} Q_c B
	\end{bmatrix}
	\end{equation}
	\begin{equation}
	\label{mat:HP}
	H_{P} \coloneqq
	\begin{bmatrix}
	\trans{B} \trans{(A_c^{N})} P A_c^{N} B   & \trans{B} \trans{(A_c^{N})} P A_c^{N-1} B   & \trans{B} \trans{(A_c^{N})} P A_c^{N-2} B   & \cdots & \trans{B} \trans{(A_c^{N})} P A_c B\\
	\trans{B} \trans{(A_c^{N-1})} P A_c^{N} B & \trans{B} \trans{(A_c^{N-1})} P A_c^{N-1} B & \trans{B} \trans{(A_c^{N-1})} P A_c^{N-2} B & \cdots & \trans{B} \trans{(A_c^{N-1})} P A_c B\\
	\trans{B} \trans{(A_c^{N-2})} P A_c^{N} B & \trans{B} \trans{(A_c^{N-2})} P A_c^{N-1} B & \trans{B} \trans{(A_c^{N-2})} P A_c^{N-2} B & \cdots & \trans{B} \trans{(A_c^{N-2})} P A_c B\\
	\vdots                                  & \vdots                                    & \vdots                          & \ddots & \vdots \\
	\trans{B} \trans{A_c} P A_c^{N} B       & \trans{B} \trans{A_c} P A_c^{N-1} B       & \trans{B} \trans{A_c} P A_c^{N-2} B       & \cdots & \trans{B} \trans{A_c} P A_c B
	\end{bmatrix}
	\end{equation}
	\hrulefill
\end{figure*}

Choosing $P$ as the solution to the DARE~\eqref{eq:dare} causes $\bar{Q}_{c}$ to not be block Toeplitz, since $\bar{Q}_{c}$ will then have a different matrix in the lower-right corner than the rest of the main diagonal.
 This means that the analysis based on the multiplication of structured block Toeplitz matrices used in the proof of Lemma~\ref{lem:HqSymbol} no longer can be applied.
 However, the resulting $H_{c}$ matrix is still block Toeplitz due to the fact that $P$ will represent the cost of the controller applied after the horizon ends.

\begin{proposition}
	\label{prop:Hpsymbol}
	If $P$ is the solution to the DARE~\eqref{eq:dare} and $K$ is the infinite-horizon LQR controller for $\mathcal{G}$, then $H_{c}$ is block Toeplitz and has the same matrix symbol as the case when $P=Q$ given in Lemma~\ref{lem:HqSymbol}.
\end{proposition}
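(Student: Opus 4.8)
The plan is to work block-by-block from the decomposition $H_{c} = H_{Q} + H_{R} + H_{P}$, using the explicit block entries of $H_{Q}$ in~\eqref{mat:HQ} and $H_{P}$ in~\eqref{mat:HP}, and to show that the terminal-cost term $H_{P}$ supplies exactly the ``tail'' that the truncated entries of $H_{Q}$ are missing, so that every block of $H_{c}$ collapses to a single infinite sum depending only on the block index. The ingredient that makes this work is a Riccati-to-Lyapunov identity: since $P$ solves the DARE~\eqref{eq:dare} and $K = \invnb{(\trans{B} P B + R)}\trans{B} P A$ is the associated infinite-horizon LQR gain, substituting this expression for $K$ into $\trans{A_{c}} P A_{c} + \trans{K} R K + Q$ and using $(\trans{B} P B + R)K = \trans{B} P A$ to cancel the cross terms shows that $P$ also satisfies the discrete Lyapunov equation $P = \trans{A_{c}} P A_{c} + Q_{c}$, with $A_{c} = A - BK$ and $Q_{c} = Q + \trans{K} R K$. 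Because $K$ was chosen so that $A_{c}$ is Schur-stable, this equation has the unique, absolutely convergent solution $P = \sum_{j=0}^{\infty} \trans{(A_{c}^{j})} Q_{c} A_{c}^{j}$, and this is the form of $P$ I would carry through the rest of the argument.

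Next I would read off the generic blocks. Fix $r \ge s$ and set $d = r - s$. From~\eqref{mat:HQ} the $(r,s)$ block of $H_{Q}$ is the truncated sum $\sum_{i=0}^{N-r} \trans{B}\trans{(A_{c}^{i})} Q_{c} A_{c}^{i+d} B$; from~\eqref{mat:HP} the $(r,s)$ block of $H_{P}$ is $\trans{B}\trans{(A_{c}^{N+1-r})} P A_{c}^{N+1-s} B$; and the $(r,s)$ block of $H_{R} = I_{N}\otimes R$ is $R$ when $d = 0$ and zero otherwise. Substituting the series form of $P$ into the $H_{P}$ block and re-indexing by $i = N+1-r+j$ turns that block into $\sum_{i=N+1-r}^{\infty} \trans{B}\trans{(A_{c}^{i})} Q_{c} A_{c}^{i+d} B$. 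Since the $H_{Q}$ sum stops exactly at $i = N-r$ and the $H_{P}$ sum starts at $i = N+1-r = (N-r)+1$, the two concatenate with no gap and no overlap, so the $(r,s)$ block of $H_{c}$ equals $\sum_{i=0}^{\infty} \trans{B}\trans{(A_{c}^{i})} Q_{c} A_{c}^{i+d} B$ (plus $R$ when $d = 0$). This depends only on $d = r-s$, and the blocks with $r < s$ follow from Hermitian symmetry; hence $H_{c}$ is block Toeplitz, and in fact it coincides with the $N\times N$ truncation of the infinite block Toeplitz matrix generated by those coefficients.

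It then remains to identify the symbol. With $\mathcal{P}_{\Gamma_{c}}(z) = z(zI - A_{c})^{-1}B = \sum_{i=0}^{\infty} A_{c}^{i} B z^{-i}$ from Lemma~\ref{lem:gammaSymbolControlled}, expanding $\ctrans{\mathcal{P}}_{\Gamma_{c}(z)} Q_{c} \mathcal{P}_{\Gamma_{c}}(z) + R$ on $\mathbb{T}$ and collecting powers of $z$ shows that the coefficient of $z^{-d}$ is $\sum_{i=0}^{\infty} \trans{B}\trans{(A_{c}^{i})} Q_{c} A_{c}^{i+d} B$ for $d \ge 0$ (and its conjugate transpose for $d < 0$), with $R$ added at $d = 0$ --- exactly the block entries found above. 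Therefore $H_{c}$ has matrix symbol $\ctrans{\mathcal{P}}_{\Gamma_{c}(z)} Q_{c} \mathcal{P}_{\Gamma_{c}}(z) + R$, which is the symbol of Lemma~\ref{lem:HqSymbol}, and membership in $\Cpi$ is inherited from that lemma. The step I expect to be the main obstacle --- and the one to write out carefully --- is the re-indexing establishing that the truncation of $H_{Q}$ and the Neumann expansion of $H_{P}$ join up at precisely consecutive indices; this is exactly where the DARE choice of $P$ and the Lyapunov identity are indispensable. Note in particular that neither $H_{Q}$ nor $H_{P}$ is block Toeplitz on its own, so the structured-product argument used for Lemma~\ref{lem:HqSymbol} cannot be invoked here, and only the Riccati structure closes the gap.
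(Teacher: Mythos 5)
Your proposal is correct and follows essentially the same route as the paper's proof: decompose $H_{c}=H_{Q}+H_{R}+H_{P}$, use the fact that the DARE solution $P$ with the LQR gain $K$ also solves the Lyapunov equation $P=\trans{A_{c}}PA_{c}+Q_{c}$ so that $P=\sum_{j=0}^{\infty}\trans{(A_{c}^{j})}Q_{c}A_{c}^{j}$, and show that the $H_{P}$ blocks supply exactly the missing tail of the truncated $H_{Q}$ sums, yielding blocks depending only on the diagonal index and matching the symbol of Lemma~\ref{lem:HqSymbol}. Your write-up is in fact somewhat more careful than the paper's (general $(r,s)$ indexing and explicit Fourier-coefficient matching, where the paper treats the first diagonal entry and asserts the rest by analogy), but the underlying argument is the same.
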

\begin{proof}
	Using the matrix splitting \eqref{eq:dense:primalHessian:MatrixSplitting}, the Hessian can be decomposed into four terms, with $H_{Q}$ and $H_{P}$ given by \eqref{mat:HQ} and \eqref{mat:HP}, respectively, and $H_{R}$ and $H_{K}$ the same as when $P=Q$.
	We start by examining the first diagonal term of $H_{Q} + H_{P}$,
	\begin{equation}
		\label{eq:proof:HpDiagonal}
		\sum_{i=0}^{N-1} \trans{B} \trans{(A_c^{i})} Q_c A_c^{i} B + \trans{B} \trans{(A_c^{N})} P A_c^{N} B.
	\end{equation}
	Since $P$ is the solution to the DARE~\eqref{eq:dare}, $P$ is also the solution to the Lyapunov equation
	\begin{equation}
	    \label{eq:proof:controlDlayp}
	    \trans{(A - BK)} P (A - BK) + Q + \trans{K} R K = P
	\end{equation}
	when $K$ is the infinite-horizon LQR controller.
	This means that $P$ can also be expressed as
	$$ P = \sum_{i = 0}^{\infty} (\trans{(A - BK)})^{i}(Q + \trans{K}RK)(A - BK)^{i},$$
	transforming~\eqref{eq:proof:HpDiagonal} into
	\begin{equation*}
		\sum_{i=0}^{N-1} \trans{B} \trans{(A_c^{i})} Q_c A_c^{i} B + \trans{B} \trans{(A_c^{N})} \left( \sum_{i = 0}^{\infty} \trans{(A_{c}^{i})} Q_{c} A_{c}^{i} \right) A_c^{N} B.
	\end{equation*}
	The $\trans{(A_{c}^{N})}$ and $A_{c}^{N}$ terms around the right summation can be consolidated into the summation, offsetting its starting point to be $i = N$ instead of $0$ --- meaning the right summation is simply the continuation of the left summation to infinity and allowing the two sums to be consolidated into
	\begin{equation}
		\label{eq:proof:Hpdiagonalelem}
		\sum_{i=0}^{\infty} \trans{B} \trans{(A_c^{i})} Q_c A_c^{i} B.
	\end{equation}

	The same analysis can be performed on the other terms on the main diagonal, which only differ by where the left summation ends and the right summation is offset to.
	Therefore, the main diagonal of the matrix sum $H_{Q} + H_{P}$ is composed of blocks with all the same terms.
	A similar analysis can be done on all diagonals above and below the main diagonal, showing that they are also composed of blocks with all the same terms down the diagonal.

	Since all the diagonals are composed of the same blocks down their length, the matrix sum $H_{Q} + H_{P}$ is block Toeplitz, and the resulting Hessian $H_{c}$ is block Toeplitz as well, since $H_{R}$ and $H_{K}$ are already known to be block Toeplitz.

	To construct the matrix symbol for $H_{c}$ when $P$ is the solution to the DARE, we examine the elements in the matrix $H_{Q} + H_{P}$ and how they relate to the case when $P = Q$.
	Note that when $P = Q$, the individual elements of the matrix $H_{Q}$ have a summation that terminates at the horizon length.
	Since the matrix symbol is based on the infinite-dimensional matrix, if the matrix $H_{Q}$ is extrapolated to a horizon of infinity to form $\mathbf{H}_{Q}$, the summations in $H_{Q}$ will all terminate at infinity.
	Therefore, the sum $H_{Q} + H_{P}$ will have the same blocks as the infinite-dimensional $\mathbf{H}_{Q}$ when $P = Q$, so the Hessians for the cases when $P = Q$ and $P$ is the solution to the DARE will both have the same matrix symbol.
\end{proof}

\subsubsection{Simplification when $\mathcal{G}$ is Schur-stable}

When $\mathcal{G}$ is Schur-stable and the results in Corollary~\ref{cor:gammaSymbolSchurStable} are used to simplify the matrix symbol of the prediction matrix, the results given in Lemma~\ref{lem:HqSymbol} and Proposition~\ref{prop:Hpsymbol} can be simplified as well.

\begin{corollary}
	\label{cor:HSymbolSchurStable}
	If the system $\mathcal{G}$ is Schur-stable, then with $K = 0$ and $P = Q$ or $P$ the solution to the discrete Lyapunov equation \eqref{eq:dlyap}, the Hessian $H$ has the matrix symbol $\mathcal{P}_{H} \in \Cpi$ with
	\begin{equation*}
	\label{eq:H:genFuncSchur}
        \mathcal{P}_{H}(z) \coloneqq \ctrans{\mathcal{P}}_{\Gamma}(z) Q  \mathcal{P}_{\Gamma}(z) + R, \quad
\forall z \in \mathbb{T}.
	\end{equation*}
	where $\mathcal{G}(\cdot)$ is the transfer function matrix for the system~$\mathcal{G}$.
\end{corollary}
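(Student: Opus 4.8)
The plan is to read the result off as the $K=0$ specialization of Lemma~\ref{lem:HqSymbol} and Proposition~\ref{prop:Hpsymbol}, which is legitimate precisely because Schur-stability of $\mathcal{G}$ makes pre-stabilization unnecessary. Setting $K=0$ gives $A_{c}=A$, $Q_{c}=Q+\trans{K}RK=Q$, and $\Gamma_{c}=\Gamma$, so Corollary~\ref{cor:gammaSymbolSchurStable} already supplies $\mathcal{P}_{\Gamma}(z)=z(zI-A)^{-1}B=z\mathcal{G}(z)$ together with $\mathcal{P}_{\Gamma}\in\Cpi$.

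For $P=Q$ I would simply invoke Lemma~\ref{lem:HqSymbol}: its symbol $\ctrans{\mathcal{P}}_{\Gamma_{c}}(z)(Q+\trans{K}RK)\mathcal{P}_{\Gamma_{c}}(z)+R$ collapses, with $K=0$, to $\ctrans{\mathcal{P}}_{\Gamma}(z)Q\mathcal{P}_{\Gamma}(z)+R$, and membership in $\Cpi$ is inherited since it is a finite product and sum of $\Cpi$ factors. For $P$ the solution of $\trans{A}PA+Q=P$ I would rerun the diagonal-consolidation argument from the proof of Proposition~\ref{prop:Hpsymbol}. The only properties of $K$ and $P$ that proof actually uses are that $A_{c}$ is Schur-stable and that $P$ satisfies the Lyapunov equation $\trans{A_{c}}PA_{c}+Q_{c}=P$, hence admits the representation $P=\sum_{i=0}^{\infty}\trans{(A_{c}^{i})}Q_{c}A_{c}^{i}$. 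With $K=0$ these become, respectively, Schur-stability of $A$ (assumed) and $\trans{A}PA+Q=P$ (the hypothesis of the corollary), with $P=\sum_{i=0}^{\infty}\trans{(A^{i})}QA^{i}$, the series converging by Schur-stability of $A$. Substituting this into the first block of $H_{Q}+H_{P}$ and folding the tail $\trans{B}\trans{(A^{N})}PA^{N}B$ back into the finite sum extends every summation to infinity; the same holds for every off-diagonal block, so $H_{Q}+H_{P}$, and therefore $H=H_{Q}+H_{P}+H_{R}$, is the $N$-truncation of the same infinite block Toeplitz matrix as in the $P=Q$ case, carrying the symbol just derived.

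The only point requiring genuine care, as opposed to bookkeeping, is checking that the proof of Proposition~\ref{prop:Hpsymbol} really is agnostic to $K$ being the LQR gain: it is phrased for the DARE solution, but nowhere does it use optimality of $K$ — only the Lyapunov identity and convergence of the associated Neumann-type series, both of which transfer verbatim to the present setting with $A_{c}=A$ and $Q_{c}=Q$. Once that observation is made, the corollary follows with no further computation.
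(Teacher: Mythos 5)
Your proposal is correct and matches the paper's intent exactly: the paper gives no separate proof for this corollary, treating it as the $K=0$ specialization of Lemma~\ref{lem:HqSymbol} and Proposition~\ref{prop:Hpsymbol}, with the explicit remark that the DARE must be replaced by the discrete Lyapunov equation in this case --- precisely the observation you flag as the one point requiring care. Nothing further is needed.
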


Note that the matrix symbol in Corollary~\ref{cor:HSymbolSchurStable} has the same form as the matrix symbol in Lemma~\ref{lem:HqSymbol}, just with the terms containing $K$ removed and a different choice of the matrix $P$.

In order for Proposition~\ref{prop:Hpsymbol} to reduce to Corollary~\ref{cor:HSymbolSchurStable} in the Schur-stable case with $K=0$, the terminal cost must be based on the solution to the discrete Lyapunov equation instead of the DARE, since using the DARE solution with no prestabilizing controller will cause the Lyapunov equation~\eqref{eq:proof:controlDlayp} used in the proof of Proposition~\ref{prop:Hpsymbol} to not be valid.

\subsection{Spectral Bounds for the Hessian}
\label{sec:specRes:bounds}

One useful property of block Toeplitz matrices is that the eigenvalue spectrum for any finite-dimensional truncation of the infinite-dimensional block Toeplitz matrix is contained within the extremal eigenvalues of its matrix symbol.
 This means that the minimum and maximum eigenvalues of the matrix $H_{c}$ can be bounded by analyzing the matrix symbol $\mathcal{P}_{H_{c}}$, since $H_{c}$ is a finite-dimensional truncation of the infinite-dimensional matrix $\mathbf{H_{c}}$ to $N$ blocks.

\begin{theorem}
    \label{thm:dense:hessEig}
    Let $H_{c}$ be the condensed Hessian with $P$ the solution to the DARE and a prediction horizon of length $N$ that is block Toeplitz with matrix symbol $\mathcal{P}_{H_{c}}$ given in Lemma~\ref{lem:HqSymbol}, then the following hold:
    \begin{enumerate}[label={(\alph*)},ref={\thetheorem(\alph*)}]
        \item $ \lambda_{min}( \mathcal{P}_{H_{c}} ) \leq  \lambda(H_{c}) \leq \lambda_{max}( \mathcal{P}_{H_{c}}) $
        \label{thm:dense:hessEig:bounds}
        \item $ \underset{N \to \infty}{\lim} \kappa(H_{c}) = \kappa( \mathcal{P}_{H_{c}} ) $
        \label{thm:dense:hessEig:condLimit}
    \end{enumerate}
\end{theorem}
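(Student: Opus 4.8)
The plan is to treat $H_{c}$ as the $N$-block truncation of the infinite-dimensional Hermitian block Toeplitz operator $\mathbf{H}_{c}$ whose matrix symbol $\mathcal{P}_{H_{c}} \in \Cpi$ is the continuous, $2\pi$-periodic, Hermitian-matrix-valued function supplied by Lemma~\ref{lem:HqSymbol} and Proposition~\ref{prop:Hpsymbol}, and then to invoke the standard correspondence between the spectrum of such a truncation and the pointwise eigenvalues of its symbol. Throughout I use that $\mathcal{P}_{H_{c}}(z) = \ctrans{\mathcal{P}}_{\Gamma_{c}}(z)(Q+\trans{K}RK)\mathcal{P}_{\Gamma_{c}}(z) + R \succeq R$ is uniformly positive definite, so $\lambda_{min}(\mathcal{P}_{H_{c}}) \geq \lambda_{min}(R) > 0$.

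For part~(a): write $H_{c} = \Pi_{N}\mathbf{H}_{c}\Pi_{N}$, where $\Pi_{N}$ is the orthogonal projection of $\ell^{2}$ onto the first $N$ blocks, so $H_{c}$ is a compression of $\mathbf{H}_{c}$. By the Rayleigh--Ritz characterisation it suffices to bound $\langle w,\mathbf{H}_{c}w\rangle/\langle w,w\rangle$ over $w \in \ell^{2}$. Passing to Fourier series and using Parseval, $\langle w,\mathbf{H}_{c}w\rangle = \frac{1}{2\pi}\int_{0}^{2\pi}\ctrans{\hat{w}(\theta)}\mathcal{P}_{H_{c}}(e^{i\theta})\hat{w}(\theta)\,d\theta$ while $\langle w,w\rangle = \frac{1}{2\pi}\int_{0}^{2\pi}\abs{\hat{w}(\theta)}^{2}\,d\theta$; inserting the pointwise bound $\lambda_{1}(\mathcal{P}_{H_{c}}(e^{i\theta}))\abs{\hat{w}(\theta)}^{2} \leq \ctrans{\hat{w}(\theta)}\mathcal{P}_{H_{c}}(e^{i\theta})\hat{w}(\theta) \leq \lambda_{n}(\mathcal{P}_{H_{c}}(e^{i\theta}))\abs{\hat{w}(\theta)}^{2}$ into the integral and dividing yields $\lambda_{min}(\mathcal{P}_{H_{c}}) \leq \langle w,\mathbf{H}_{c}w\rangle/\langle w,w\rangle \leq \lambda_{max}(\mathcal{P}_{H_{c}})$, hence the same bounds for every eigenvalue of $H_{c}$. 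This is the block analogue of a classical Toeplitz fact and can be quoted from the block Toeplitz literature already referenced.

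For part~(b): since the $N$-block truncation $H_{c}$ is a principal submatrix of the $(N{+}1)$-block truncation, Cauchy interlacing shows the smallest eigenvalue of $H_{c}$ is non-increasing in $N$ and the largest is non-decreasing; combined with the uniform bounds of part~(a) and uniform positive definiteness, both monotone sequences converge to finite positive limits, which by definition are $\lambda_{min}(\mathbf{H}_{c})$ and $\lambda_{max}(\mathbf{H}_{c})$. Hence $\lim_{N\to\infty}\kappa(H_{c}) = \lim_{N\to\infty}\lambda_{max}(H_{c})/\lambda_{min}(H_{c}) = \lambda_{max}(\mathbf{H}_{c})/\lambda_{min}(\mathbf{H}_{c})$, and it remains to identify these with $\lambda_{max}(\mathcal{P}_{H_{c}})$ and $\lambda_{min}(\mathcal{P}_{H_{c}})$. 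Part~(a) already gives $\lambda_{min}(\mathbf{H}_{c}) \geq \lambda_{min}(\mathcal{P}_{H_{c}})$ and $\lambda_{max}(\mathbf{H}_{c}) \leq \lambda_{max}(\mathcal{P}_{H_{c}})$; for the reverse inequalities fix $\epsilon > 0$, choose $z_{0} = e^{i\theta_{0}} \in \mathbb{T}$ and a unit vector $\xi \in \mathbb{C}^{m}$ with $\ctrans{\xi}\mathcal{P}_{H_{c}}(z_{0})\xi < \lambda_{min}(\mathcal{P}_{H_{c}}) + \epsilon$, and form the normalised ``wave-packet'' test vector whose $k$-th block is $N^{-1/2}z_{0}^{-k}\xi$ for $k = 0,\dots,N-1$. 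Using absolute summability (equivalently continuity) of the symbol, the associated Rayleigh quotient converges to $\ctrans{\xi}\mathcal{P}_{H_{c}}(z_{0})\xi$ as $N \to \infty$, the discrepancy being a boundary term of order $1/N$, so $\limsup_{N}\lambda_{min}(H_{c}) \leq \lambda_{min}(\mathcal{P}_{H_{c}}) + \epsilon$; letting $\epsilon \to 0$ closes the gap, and the estimate for $\lambda_{max}$ is the mirror image.

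The main obstacle is precisely this last step: the construction of the approximate eigenvectors and the estimate that the finite-window truncation error in the Rayleigh quotient vanishes, which is the block Szeg\H{o}-type limit theorem. Everything else reduces to Parseval's identity, interlacing, and passing to the ratio of two convergent sequences with denominator bounded away from zero. If preferred, part~(a) and the identification of the limiting extremal eigenvalues in part~(b) may be cited wholesale from standard references on the spectral asymptotics of block Toeplitz operators, leaving only the short bookkeeping argument that combines those facts with the positive definiteness of $R$.
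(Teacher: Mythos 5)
Your proof is correct, and it follows the same underlying route as the paper --- the Szeg\H{o}-type spectral theory of truncated block Toeplitz matrices --- but where the paper simply cites the survey of Guti\'errez-Guti\'errez and Crespo for part~(a) and then asserts part~(b) by ``taking the limit,'' you supply the full machinery: the Parseval/Rayleigh-quotient argument for the two-sided bound, Cauchy interlacing for monotonicity of the extremal eigenvalues of successive truncations, and the wave-packet test vectors $N^{-1/2}z_0^{-k}\xi$ to show those monotone limits actually \emph{attain} $\lambda_{min}(\mathcal{P}_{H_c})$ and $\lambda_{max}(\mathcal{P}_{H_c})$. This last step is worth emphasizing: the paper's own proof of~(b) invokes only the bounds from~(a), which by themselves give $\lim_N \kappa(H_c) \leq \kappa(\mathcal{P}_{H_c})$ but not equality; the convergence of the extremal eigenvalues to the symbol's extremes is the essential (cited but unstated) ingredient, and your approximate-eigenvector construction is precisely what closes that gap. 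You also correctly flag that the ratio of the two convergent sequences is well defined because $\mathcal{P}_{H_c}(z) \succeq R \succ 0$ keeps the denominator bounded away from zero, a point the paper leaves implicit. Two minor remarks: your Parseval step should use the Euclidean norm $\norm{\hat w(\theta)}^2$ of the block rather than $\abs{\hat w(\theta)}^2$, and the interlacing argument relies on the $N$-block Hessian being a leading principal submatrix of the $(N{+}1)$-block one, which holds under the theorem's hypothesis that $H_c$ is the truncation of a single infinite block Toeplitz matrix (i.e.\ the setting of Proposition~\ref{prop:Hpsymbol}); it would be worth stating that reliance explicitly.
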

\begin{proof}\hfill~
    \begin{enumerate}[label={(\alph*)},ref={\thetheorem(\alph*)}]
        \item The spectrum of a finite-dimensional truncation of a block Toeplitz matrix with a symbol in $\Cpi$ is bounded by the extremes of the spectrum of its symbol \cite[Theorem 4.4]{Gutierrez-Gutierrez2012_blockSurvey}.
        \item Note that $H_{c}$ is a Hermitian matrix, which means that it is also normal \cite[\S 4.1]{Horn2013}. Since it is both normal and positive semi-definite, the singular values are the same as the eigenvalues \cite[\S 3.1]{Horn1994}, resulting in the condition number becoming $\kappa(H_{c}) = \frac{\lambda_{n}(H_{c})}{\lambda_{1}(H_{c})}$. Taking the limit of both sides in conjunction with the spectral bounds from part~(a) gives
        \begin{equation*}
        \underset{N \to \infty}{\lim} \kappa(H_{c})
        =
        \kappa( \mathcal{P}_{H_{c}} ).
        \end{equation*}
    \end{enumerate}
\end{proof}

\begin{corollary}
    \label{cor:stableHessEig}
	If the system $\mathcal{G}$ is Schur-stable, then with $K = 0$ and $P$ the solution to the discrete Lyapunov equation~\eqref{eq:dlyap}, the results in Theorem~\ref{thm:dense:hessEig} can be applied to $H$ using the matrix symbol $\mathcal{P}_{H}$ instead of $\mathcal{P}_{H_{c}}$.
\end{corollary}

Essentially, these results say that the spectrum for the condensed Hessian in these cases will always be contained inside the interval defined by the maximum and minimum eigenvalues of the matrix symbol in Lemma~\ref{lem:HqSymbol}/Corollary~\ref{cor:HSymbolSchurStable}.
 Additionally, as $N \to \infty$ the extremal eigenvalues of the Hessian will converge asymptotically to the maximum and minimum eigenvalues of its symbol.

\section{Preconditioning}
\label{sec:precond}

The spectral results presented in Sections~\ref{sec:specRes:condHess} and~\ref{sec:specRes:bounds} can be readily extended to analyze the case of a preconditioned Hessian, as well as to help design new preconditioners.

\subsection{Analysis of the Preconditioned Hessian}
\label{sec:precond:analysis}

For simplicity of discussion, we focus on the case when $H_{c}$ is symmetrically preconditioned to form
\begin{equation}
    \label{eq:precond:symmetric}
    H_{L} \coloneqq L_{N}^{-1} H_{c} \trans{(L_{N}^{-1})},
\end{equation}
 where $L_{N}$ is a block-diagonal preconditioner that has $N$ copies of the matrix $L$ on its diagonal, thus guaranteeing that the preconditioned matrix is block Toeplitz.
 This case is most appropriate for first-order methods, since it guarantees that the structure of the feasible set is preserved over the preconditioning operation and that the preconditioned Hessian is symmetric \cite{Richter2012_FGMcomplexity}.

Since the preconditioner matrix $L_{N}$ is block-diagonal with only~$L$ on its main diagonal, the matrix symbol for $L_{N}$ is simply~$L$.
 The spectral bounds in Section~\ref{sec:specRes:bounds} can then be extended to the preconditioned matrix $H_{L}$ by simply replacing $\mathcal{P}_{H_{c}}$ in Theorem~\ref{thm:dense:hessEig:condLimit} with $\mathcal{P}_{H_{L}}$ given by
 \begin{equation}
     \label{eq:precond:toepSymbol}
     \mathcal{P}_{H_{L}} \coloneqq \bar{L} \mathcal{P}_{H_{c}} \trans{\bar{L}},
 \end{equation}
 where $\bar{L} \coloneqq L^{-1}$.
 A similar substitution can be made when $\mathcal{G}$ is Schur-stable by using $\mathcal{P}_{H}$ in \eqref{eq:precond:toepSymbol} with Corollary~\ref{cor:stableHessEig}.
 This analysis requires $L_{N}$ to be block Toeplitz, which may not be the case for many methods of computing the preconditioner, unless such a constraint is added to its computation.

\subsection{Preconditioner Design}
\label{sec:precond:design}

There is a rich literature of preconditioners for Toeplitz and circulant matrices, with a focus on designing the preconditioners independent of the size of the matrix (see \cite{Chan2007} and references therein).
 These existing ideas can be applied to the block Toeplitz structure of the Hessian in the CLQR problem in order to design preconditioners to use when solving the QP.

One of the first circulant preconditioners was proposed by Gilbert Strang in \cite{strangProposalToeplitzMatrix1986}.
 This preconditioner was originally proposed for preconditioning iterative conjugate gradient methods, and is formed by simply copying the central diagonals of the Toeplitz matrix into the preconditioning matrix and wrapping them around to form a circulant matrix.
 Strang's preconditioner can be naturally extended to the block Toeplitz case by simply copying the individual blocks into the preconditioning matrix and wrapping them around to form a block circulant matrix.

In the case of the CLQR problem with a block diagonal preconditioning matrix, we can explicitly compute the block on the diagonal of the preconditioning matrix without forming the entire Hessian, as shown in Theorem~\ref{thm:precond:circ}.
 \begin{theorem}
    \label{thm:precond:circ}
    Let $H_{c}$ be the Hessian from \eqref{eq:mpc:stablecondMPC} formed by choosing either:
    \begin{itemize}
        \item $K$ as the infinite-horizon LQR controller for $\mathcal{G}$, with $P$ the solution to the DARE~\eqref{eq:dare}, or
        \item $K = 0$ with $P$ the solution to the discrete-time Lyapunov equation~\eqref{eq:dlyap} for a Schur-stable $\mathcal{G}$.
    \end{itemize}
    The matrix $H_{c}$ can be symmetrically preconditioned as $L_{N}^{-1} H_{c} \trans{(L_{N}^{-1})}$, with $L_{N} = I_{N} \otimes L$ and the blocks $L$ given by the lower-triangular Cholesky decomposition of $M$ with
    \begin{equation*}
        M \coloneqq \trans{B} P B - \trans{B} \trans{K} R - R K B + R.
    \end{equation*}
 \end{theorem}
 \begin{proof}
 Based on the work in \cite{strangProposalToeplitzMatrix1986}, a Circulant preconditioning matrix $W$ for the block Toeplitz matrix $V$ will have entries that are obtained by copying the central diagonals of $V$ and wrapping them around to form a circulant matrix  since the main diagonals are usually strongly dominant.
 For example, the matrix
 \begin{equation*}
    V \coloneqq \begin{bmatrix}
    V_{0} & V_{1} & V_{2} & V_{3} & V_{4} \\
    V_{-1} & V_{0} & V_{1} & V_{2} & V_{3} \\
    V_{-2} & V_{-1} & V_{0} & V_{1} & V_{2} \\
    V_{-3} & V_{-2} & V_{-1} & V_{0} & V_{1} \\
    V_{-4} & V_{-3} & V_{-2} & V_{-1} & V_{0} \\
    \end{bmatrix},
 \end{equation*}
 will have a block circulant preconditioning matrix
 \begin{equation*}
    W \coloneqq \begin{bmatrix}
    V_{0}  & V_{1}  & V_{2}  & V_{-2} & V_{-1} \\
    V_{-1} & V_{0}  & V_{1}  & V_{2}  & V_{-2} \\
    V_{-2} & V_{-1} & V_{0}  & V_{1}  & V_{2} \\
    V_{2}  & V_{-2} & V_{-1} & V_{0}  & V_{1} \\
    V_{1}  & V_{2}  & V_{-2} & V_{-1} & V_{0} \\
    \end{bmatrix}.
 \end{equation*}
 Since we want the preconditioner to be block diagonal, we only need to focus on computing the diagonal block $V_0$ for the CQLR.

 For the block Toeplitz Hessian $H_{c}$, the main diagonal block of the infinite dimensional block Toeplitz matrix is \eqref{eq:proof:Hpdiagonalelem}, which when the $H_{R}$ and $H_{K}$ components are added becomes
 \begin{equation*}
 	 \sum_{i=0}^{\infty} \trans{B} \trans{(A_{c}^{i})} Q_{c} A_{c}^{i} B - \trans{B} \trans{K} R - R K B + R.
 \end{equation*}
 Since $A_{c}$ is Schur-stable and $K$ is the LQR controller, this sum converges to the solution of the DARE, making the diagonal block
 \begin{equation*}
    V_{0} = \trans{B} P B - \trans{B} \trans{K} R - R K B + R.
 \end{equation*}
 Since Strang's preconditioner is designed as a left preconditioner (i.e.\  $\invnb{W}V$) and we want a symmetric preconditioner, we apply the lower-triangular Cholesky factorization to $M$, resulting in $L$.
 \end{proof}

\begin{remark}
	The preconditioning matrix $L$ in Theorem~\ref{thm:precond:circ} can also be formed by using the matrix square-root of $M$ instead of the Cholesky factorization, which could allow for $L$ to have the same structure as $M$ if the square root operation is structure preserving (e.g.\ if $\mathcal{G}$ is a circulant system, using the matrix square root can lead to $L$ and $H_{L}$ being circulant as well).
\end{remark}

Note that the matrices $M$ and $L$ will have dimension $m \times m$, and that the full preconditioning matrix $L_{N}$ is formed by simply repeating $L$ down the diagonal $N$ times, so changing the horizon length means simply adding or removing blocks of $L$ from the diagonal of $L_{N}$.
 
Since $L_{N}$ is block Toeplitz, we can show that if $\mathcal{G}$ is single-input, the preconditioner will not affect the conditioning.
\begin{proposition}
    \label{prop:cond:singleInput}
    If the system $\mathcal{G}$ is single-input (i.e.\ $m = 1$), then the block Toeplitz preconditioner $L_{N}$ will not affect the condition number of $H_{c}$ (i.e.\ $\kappa(H_{L}) = \kappa(H_{c})$).
\end{proposition}
\begin{proof}
    A system with $m = 1$ will have $L \in \mathbb{R}$, making the preconditioner matrix simply $L_{N} = I_{N} \otimes L = L I_{N}$.
    For the symmetric case, \eqref{eq:precond:symmetric} becomes $H_{L} = \left(\frac{1}{L} I_{N}\right) H_{c} \left(I_{N} \frac{1}{L}\right)$, which simplifies to $H_{L} = \frac{1}{L^2} H_{c}$.
    Using the fact that $\lambda_i(\alpha H_c) = \abs{\alpha}\lambda_i(H_{c})$,
    \begin{equation*}
        \kappa(H_{L})
        = \frac{\lambda_{max}\left( \frac{1}{L^2}H_{c} \right)}{\lambda_{min}\left(\frac{1}{L^2} H_{c}\right)}
        = \frac{\abs{L^2}}{\abs{L^2}} \frac{\lambda_{max}(H_{c})}{\lambda_{min}(H_{c})}
        = \kappa(H_{c}).
    \end{equation*}
\end{proof}

Also note that all the matrices involved in computing $M$ have dimensions on the order of $m$ and $n$, and have no relation to the horizon length.
 This is in contrast to SDP-based preconditioner design techniques such as \cite{Richter2012_FGMcomplexity}, which require the full Hessian to be placed inside the semidefinite optimization problem.

\section{Numerical Experiments}
\label{sec:comparisons}

In this section, we present numerical examples showing the spectral properties computed using the results of Section~\ref{sec:spectralProperties}, and also the effect of applying the preconditioner from Section~\ref{sec:precond:design} to the CLQR problem for four systems.

\subsection{Example Systems}

\subsubsection{Schur-stable system}
The first two example systems we use are the Schur-stable discrete-time system with four states and two inputs given in \cite{Jones2008} with state equation
 \begin{align*}
    x_{k+1} = \begin{bmatrix}
    0.7 & -0.1 & 0.0 & 0.0 \\
    0.2 & -0.5 & 0.1 & 0.0 \\
    0.0 &  0.1 & 0.1 & 0.0 \\
    0.5 &  0.0 & 0.5 & 0.5
    \end{bmatrix}
    x_{k} +
    \begin{bmatrix}
    0.0 & 0.1 \\
    0.1 & 1.0 \\
    0.1 & 0.0 \\
    0.0 & 0.0
    \end{bmatrix}
    u_{k}.
 \end{align*}
 We constrain the inputs of the system to be $\abs{u_{k}} \leq 0.5$ and use a prediction horizon of $N=10$.
 The first system has cost matrices
 \begin{equation}
    \label{eq:schurEx:normal}
    Q = \text{diag}(10, 20, 30, 40), \quad
    R = \text{diag}(10, 20),
 \end{equation}
 and the second system has cost matrices
 \begin{equation}
    \label{eq:schurEx:illcond}
    Q = \text{diag}(100, 200, 300, 400), \quad
    R = \text{diag}(0.001, 0.002).
 \end{equation}

\subsubsection{Inverted pendulum}

The next example system we use is a linearized inverted pendulum described by the continuous-time dynamics
 \begin{align*}
    \dot{x} = \begin{bmatrix}
                 0 &  1 & 0 & 0 \\
    \frac{3 g}{2l} & -b & 0 & 0 \\
                 0 &  0 & 0 & 1 \\
                 0 &  0 & 0 & 0
    \end{bmatrix}
    x +
    \begin{bmatrix}
    0 \\
    \frac{3}{2l} \\
    0 \\
    1
    \end{bmatrix}
    u,
 \end{align*}
 with $g = 9.8067$, $b = 1$, and $l = 0.21$.
 The system was discretized using a zero-order hold with a sampling time of 0.02\,s, resulting in an unstable discrete-time system.
 The CLQR problem used the cost matrices
 $ Q = \text{diag}(1000, 1, 100, 1), R = 10, $
 a prediction horizon of $N=10$, and input constraints $\abs{u} \leq 10$,

\subsubsection{Distillation column}

The final example system we use is a binary distillation column with 11 states and 3 inputs from \cite[Problem 90-01]{davisonBenchmarkProblemsControl1990}.
 The system was discretized using a zero-order hold with a sampling time of 1.0\,s, resulting in a Schur-stable system.
 We used a prediction horizon of $N=100$ and the cost matrices
 \begin{align*}
    Q &= \text{diag}(10, 20, 30, 40, 50, 60, 70, 80, 90, 100, 110), \\
    R &= \text{diag}(10, 20, 30),
 \end{align*}
 with input constraints
 $\abs{u_1}~{\leq}~2.5,$
 $\abs{u_2}~{\leq}~2.5,$
 $\abs{u_3}~{\leq}~0.30.$

\subsection{Matrix Conditioning}

\begin{figure}[t!]
    \centering
    
    \pgfplotsset{Q_inst/.style={only marks, red, every mark/.append style={solid, fill=white}, mark = *}}
    \pgfplotsset{Q_limit/.style={dashed, very thick, blue}}
    \pgfplotsset{Lyap_inst/.style={only marks, blue, every mark/.append style={solid, fill=white}, mark = triangle}}
    \pgfplotsset{S_inst/.style={only marks, brown, every mark/.append style={solid, fill=white}, mark = square}}
    \pgfplotsset{S_limit/.style={dotted, very thick, brown}}
    
    \subfloat[Original Hessian]
    {
    \label{fig:schur:original}
        \begin{tikzpicture}
            \pgfplotstableread[col sep=comma]{figures/data/spectrum_JonesMorari.csv}{\spectrumData}

            \begin{axis}[xmin   = 0,
                         xmax   = 60,
                         ymin   = 2,
                         ymax   = 12,
                         grid   = major,
                         xlabel = {Horizon Length $N$},
                         ylabel = {$\kappa(H)$},
                         height = 4cm,
                         width  = 0.45\textwidth,
                         legend cell align = left,
                         legend columns = 1,
                         legend style={at={(0.5, 1.05)}, anchor=south, font=\small}]

                    \addplot[Q_inst] table [x=N, y=cond_q] {\spectrumData};
                    \addlegendentry{$H$ with $P=Q$}
                                
                    \addplot[Lyap_inst] table [x=N, y=cond_p] {\spectrumData};
                    \addlegendentry{$H$ with $P$ the solution to \eqref{eq:dlyap}}
                    
                    \addplot[Q_limit] table [x=N, y=cond_bound] {\spectrumData};
                    \addlegendentry{Bound from Corollary~\ref{cor:stableHessEig}}
            \end{axis}
        \end{tikzpicture}
    }
    
    \subfloat[Preconditioned Hessian]
    {
        \label{fig:schur:pedcond}
        \begin{tikzpicture}
            \pgfplotstableread[col sep=comma]{figures/data/spectrum_precond_JonesMorari.csv}{\precondData}
    
            \begin{axis}[xmin   = 0,
                         xmax   = 60,
                         ymin   = 1,
                         ymax   = 4,
                         grid   = major,
                         xlabel = {Horizon Length $N$},
                         ylabel = {$\kappa(H_{L})$},
                         height = 4cm,
                         width  = 0.45\textwidth,
                         legend cell align = left,
                         legend columns = 1,
                         legend style={at={(0.5, 1.05)}, anchor=south, font=\small}]

                    \addplot[Q_inst] table [x=N, y=cond_sdp] {\precondData};
                    \addlegendentry{SDP \cite{Richter2012_FGMcomplexity}}

                    \addplot[Lyap_inst] table [x=N, y=cond_strang] {\precondData};
                    \addlegendentry{Proposed (Theorem~\ref{thm:precond:circ})}
                    
                    \addplot[Q_limit] table [x=N, y=bound_strang] {\precondData};
                    \addlegendentry{Bound from Section~\ref{sec:precond:analysis}}
            \end{axis}
        \end{tikzpicture}
    }

    \caption{Condition number versus the horizon length of the condensed Hessian for the Schur-stable system~\eqref{eq:schurEx:normal}}
    \label{fig:schur}
\end{figure}
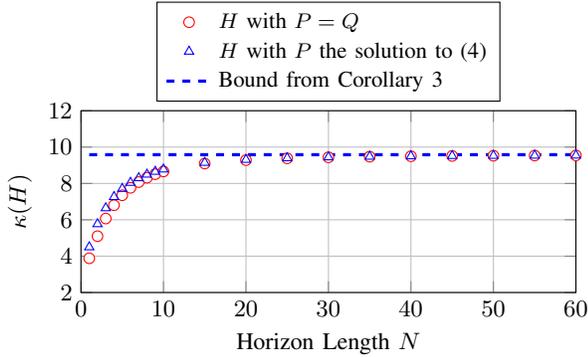
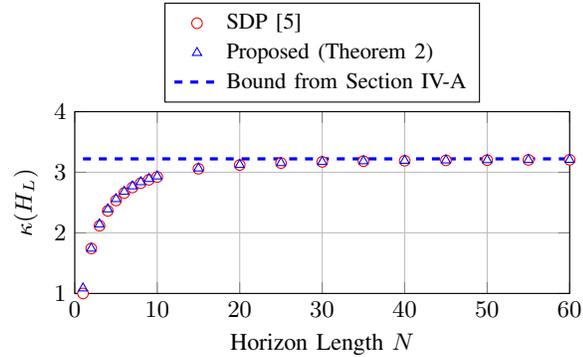

The first numerical results we present examine the Hessian conditioning.
 The preconditioners were implemented using Julia 1.6.1, and the COSMO solver (version 0.8.1) \cite{garstkaCOSMOConicOperator2019} was used to compute the SDP preconditioner from \cite{Richter2012_FGMcomplexity}.
 Figures~\ref{fig:schur:original} and~\ref{fig:pendulum:original} show that the limits presented in Theorem~\ref{thm:dense:hessEig:condLimit} are attained as the horizon length increases.
 Note that the rate at which the finite-dimensional Hessian's condition number approaches the bound is system-dependent, with the Schur-stable system~\eqref{eq:schurEx:normal} converging within 0.01\% of the bound by $N=40$, and the inverted pendulum system by $N = 225$.

As described in Section~\ref{sec:precond:analysis}, the condition number bound of Theorem~\ref{thm:dense:hessEig:condLimit} can be used to analyze the preconditioned Hessian, and is shown in Figures~\ref{fig:schur:pedcond} and~\ref{fig:pendulum:pedcond}.
 The bound was computed for the proposed preconditioner by finding $L$ using Theorem~\ref{thm:precond:circ}, then substituting \eqref{eq:precond:toepSymbol} into Theorem~\ref{thm:dense:hessEig:condLimit}.
 Note that the bound computed with $L$ from Theorem~\ref{thm:precond:circ} does not hold for the SDP preconditioner from \cite{Richter2012_FGMcomplexity}, since $L$ will be different between the two preconditioners.
 Additionally, the SDP preconditioner does not guarantee that $L_{N}$ will be block Toeplitz, so Theorem~\ref{thm:dense:hessEig} cannot be used to compute horizon-independent limits when it is used.

Comparing the proposed preconditioner against the existing SDP preconditioner shows that they produce nearly identical condition numbers, as can be seen in Figures~\ref{fig:schur:pedcond} and~\ref{fig:pendulum:pedcond} and Table~\ref{tab:cond}.
 Proposition~\ref{prop:cond:singleInput} can also be seen in Figure~\ref{fig:pendulum:pedcond}, where the conditioning of the Hessian is not affected by the preconditioners, since the inverted pendulum is a single-input system.

As shown in Table~\ref{tab:cond}, both the SDP preconditioner and our proposed preconditioner decrease the condition number by 66\% for the Schur-stable system~\eqref{eq:schurEx:normal} and the already Schur-stable non-prestabilized distillation column, and decrease the condition number for the ill-conditioned Schur-stable system~\eqref{eq:schurEx:illcond} by 97\%.
 Applying the preconditioner to the prestabilized inverted pendulum system has no effect, leaving the condition number nearly identical to the original, since this system is single-input.

\begin{figure}[t!]
    \centering
    
    \pgfplotsset{Q_inst/.style={only marks, red, every mark/.append style={solid, fill=white}, mark = *}}
    \pgfplotsset{Q_limit/.style={dashed, very thick, blue}}
    \pgfplotsset{Lyap_inst/.style={only marks, blue, every mark/.append style={solid, fill=white}, mark = triangle}}
    \pgfplotsset{S_inst/.style={only marks, brown, every mark/.append style={solid, fill=white}, mark = square}}
    \pgfplotsset{S_limit/.style={dotted, very thick, brown}}
    
    \subfloat[Original Hessian]
    {
    \label{fig:pendulum:original}
        \begin{tikzpicture}
            \pgfplotstableread[col sep=comma]{figures/data/spectrum_InvertedPendulum.csv}{\spectrumData}

            \begin{axis}[xmin   = 0,
                         xmax   = 175,
                         ymin   = 1,
                         ymax   = 3,
                         grid   = major,
                         xlabel = {Horizon Length $N$},
                         ylabel = {$\kappa(H)$},
                         height = 4cm,
                         width  = 0.45\textwidth,
                         legend cell align = left,
                         legend columns = 1,
                         legend style={at={(0.5, 1.05)}, anchor=south, font=\small}]
                               
                    \addplot[Lyap_inst] table [x=N, y=cond_p] {\spectrumData};
                    \addlegendentry{$H$ with $P$ the solution to \eqref{eq:dare}}
                    
                    \addplot[Q_limit] table [x=N, y=cond_bound] {\spectrumData};
                    \addlegendentry{Bound from Theorem~\ref{thm:dense:hessEig:condLimit}}
            \end{axis}
        \end{tikzpicture}
    }
    
    \subfloat[Preconditioned Hessian]
    {
        \label{fig:pendulum:pedcond}
        \begin{tikzpicture}
            \pgfplotstableread[col sep=comma]{figures/data/spectrum_precond_InvertedPendulum.csv}{\precondData}
    
            \begin{axis}[xmin   = 0,
                         xmax   = 100,
                         ymin   = 1,
                         ymax   = 2.6,
                         grid   = major,
                         xlabel = {Horizon Length $N$},
                         ylabel = {$\kappa(H_{L})$},
                         height = 4cm,
                         width  = 0.45\textwidth,
                         legend cell align = left,
                         legend columns = 1,
                         legend style={at={(0.5, 1.05)}, anchor=south, font=\small}]

                    \addplot[S_inst] table [x=N, y=cond_orig] {\precondData};
                    \addlegendentry{Un-preconditioned}
               
                    \addplot[Q_inst] table [x=N, y=cond_sdp] {\precondData};
                    \addlegendentry{SDP \cite{Richter2012_FGMcomplexity}}
               
                    \addplot[Lyap_inst] table [x=N, y=cond_strang] {\precondData};
                    \addlegendentry{Proposed (Theorem~\ref{thm:precond:circ})}
                    
                    \addplot[Q_limit] table [x=N, y=bound_strang] {\precondData};
                    \addlegendentry{Bound from Section~\ref{sec:precond:analysis}}
            \end{axis}
        \end{tikzpicture}
    }

    \caption{Condition number versus the horizon length of the pre-stabilized condensed Hessian for the inverted pendulum system.}
    \label{fig:pendulum}
\end{figure}
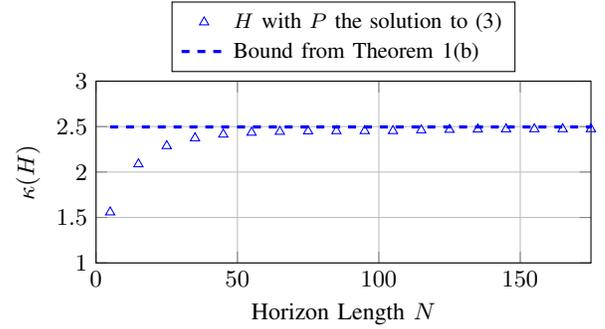
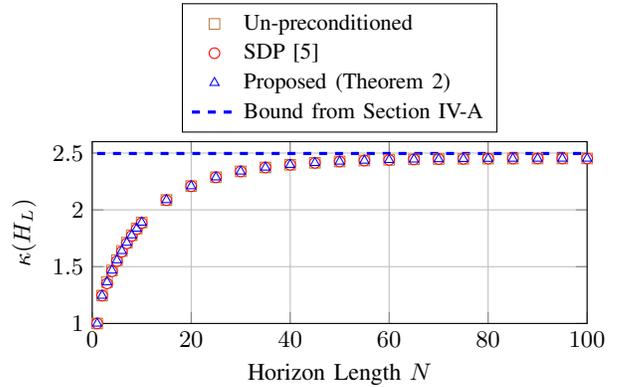

\subsection{Performance}

To test the effect of the preconditioners on the performance of a first-order method for the CLQR problem, the Fast Gradient Method (FGM) was implemented in Julia using the constant step-size scheme \cite{Jerez2014} and gradient map stopping criteria \cite[Section 6.3.1]{Richter2012} with a desired error of $10^{-5}$.
 The inequality constraints were implemented through projection operations, with the non-prestabilized problems being simple projections onto a box and the prestabilized problems requiring a more complex projection algorithm.
 In these examples, the projections for the prestabilized problems were computed by solving the projection QP directly; however, in an embedded application other techniques like an explicit QP \cite{Merkli2015} can be used instead.

We present two types of iteration results in Table~\ref{tab:iter}: the actual iterations taken by the FGM and the cold-start Upper Iteration Bound (UIB) from \cite{Richter2012_FGMcomplexity}.
 The UIB will be the worst-case number of iterations needed to achieve convergence regardless of the number of active constraints in the solution, while the actual iterations is the number taken on a single example QP for the CLQR problem.
 The proposed preconditioner gives a 2.1x actual and 2.6x UIB speedup for the Schur-stable system~\eqref{eq:schurEx:normal}, a 3.6x actual and 2.25x UIB speedup for the non-prestabilized distillation column, and a 4.5x actual and 9.5x UIB speedup for the ill-conditioned Schur-stable system~\eqref{eq:schurEx:illcond}.

When the SDP preconditioner is applied to the non-prestabilized inverted pendulum, it has no effect on the iterations required for the FGM to converge, requiring 51 iterations in both cases.
 Applying a prestabilizing controller to the inverted pendulum gives a 12.75x speedup, while prestabilizing the distillation column produces a speedup of 9.6x, a larger speedup than just preconditioning the non-prestabilized system.

The proposed preconditioner is also faster to compute than the SDP preconditioner, with timing results for the example systems given in Table~\ref{tab:compTimes}.
 These results show at least an order of magnitude difference in the computation times, with the proposed preconditioner requiring 2.5\,ms to compute for the non-prestabilized distillation column versus 151.74 seconds for the SDP preconditioner.

\begin{table}[t!]
    \center
    \caption{Condition number of the preconditioned Hessian.}
    \label{tab:cond}
    \begin{tabular}{cccc}
        \textbf{System} & \textbf{Original} & \textbf{SDP \cite{Richter2012_FGMcomplexity}} & \textbf{Proposed (Thm.~\ref{thm:precond:circ})}  \\
        \hline Schur-stable~\eqref{eq:schurEx:normal} & 8.776 & 2.922 & 2.933 \\
        \hline \makecell{Ill-conditioned \\ Schur-stable~\eqref{eq:schurEx:illcond}} & 254.66 & 7.415 & 7.500  \\
        \hline \makecell{Inverted pendulum \\ (non-prestabilized)} & 42.512 & 42.468 & (Not computable) \\
        \hline \makecell{Inverted pendulum \\ (LQR prestabilized)} & 1.889 & 1.884 & 1.889 \\
        \hline \makecell{Distillation column \\ (non-prestabilized)} & 21.527 & 7.175 & 7.175\\
        \hline \makecell{Distillation column \\ (LQR prestabilized)} & 3.004 & 1.017 & 1.025\\
    \end{tabular}
\end{table}
\begin{table}[t!]
    \center
    \begin{threeparttable}
        \caption{Iterations required for cold-start convergence of the Fast Gradient Method to $\epsilon {=} 10^{-5}$.}
        \label{tab:iter}
        \begin{tabular}{cScScSc}
            \textbf{System} & \textbf{None}\tnote{1} & \textbf{SDP \cite{Richter2012_FGMcomplexity}\tnote{1}} & \textbf{Proposed (Thm.~\ref{thm:precond:circ})}\tnote{1}  \\
            \hline Schur-stable~\eqref{eq:schurEx:normal} & 42 / 19 & 16 / 9 & 16 / 9 \\
            \hline \makecell{Ill-conditioned \\ Schur-stable~\eqref{eq:schurEx:illcond}} & 294 / 114 & 32 / 25 & 31 / 25  \\
            \hline \makecell{Inverted pendulum \\ (non-prestabilized)} & 143 / 51 & 129 / 51 & (Not computable) \\
            \hline \makecell{Inverted pendulum \\ (LQR prestabilized)} & 18 / 3 & 17 / 3 & 18 / 3 \\
            \hline \makecell{Distillation column \\ (non-prestabilized)} & 97 / 48 & 43 / 25 & 43 / 25\\
            \hline \makecell{Distillation column \\ (LQR prestabilized)} & 22 / 5 & 3 / 3 & 3 / 3 \\
        \end{tabular}
        \begin{tablenotes}
        \item [1] Cold-start upper iteration bound from \cite{Richter2012_FGMcomplexity} / actual iterations
        \end{tablenotes}
    \end{threeparttable}
\end{table}
\begin{table}[t!]
    \center
    \caption{Time required for computing the preconditioners.}
    \label{tab:compTimes}
    \begin{tabular}{cccc}
        \textbf{System} & \textbf{SDP \cite{Richter2012_FGMcomplexity}} (ms) & \textbf{Proposed (Thm.~\ref{thm:precond:circ})} (ms) \\
        \hline Schur-stable~\eqref{eq:schurEx:normal} & 197.4 & 0.213 \\
        \hline \makecell{Ill-conditioned \\ Schur-stable~\eqref{eq:schurEx:illcond}} & 142.9 & 0.218  \\
        \hline \makecell{Inverted pendulum \\ (non-prestabilized)} & 18.69 & (Not computable) \\
        \hline \makecell{Inverted pendulum \\ (LQR prestabilized)} & 17.45  & 0.218  \\
        \hline \makecell{Distillation column \\ (non-prestabilized)} & 151746 & 2.543 \\
        \hline \makecell{Distillation column \\ (LQR prestabilized)} & 81929 & 2.545 \\
    \end{tabular}
\end{table}

\section{Conclusions}

In this work we presented a new preconditioner that is based on the block Toeplitz structure of the Hessian for the condensed CLQR problem when using either a prestabilizing LQR controller or a Schur-stable system with appropriate choice of $P$.
 We showed that this preconditioner is comparable to an existing SDP preconditioner on several numerical examples, and can lead to speedups of between 2.1x and 9.6x for the Fast Gradient Method.
 The proposed preconditioner is also faster to compute than the SDP preconditioner, especially for long-horizon problems like the distillation column example.

This work also highlighted the relationship between the transfer function and the spectrum of the condensed Hessian.
 We derived results relating the extrema of the condensed Hessian's spectrum to the extrema of the spectrum for a complex-valued matrix symbol formed using the weighting matrices and the system's transfer function, and showed that these results can be used to obtain bounds on the condition number of the Hessian when using our proposed preconditioner.
 The examples also showed that the numerical prestabilization controller $K$ has a direct effect on the spectrum of the Hessian, so preconditioning could also be achieved through a careful choice of $K$ instead of applying a separate preconditioner (at the expense of turning the constraint sets into more complex shapes).
 Future work could explore developing a preconditioner based on loop-shaping of the system to reduce the Hessian's condition number.

The derivation and examples in this work focused on preconditioning the primal QP for the CLQR problem, however it is also common to use a dual form of \eqref{eq:mpc:condMPC} with gradient algorithms such as the Dual Gradient Projection or Dual Fast Gradient Method.
 The preconditioner defined in Theorem~\ref{thm:precond:circ} could be extended to handle the dual problem by using the diagonal blocks of the dual Hessian in the preconditoner instead.
 Further work is needed to extend the theoretical bounds for the preconditioned Hessian in Section~\ref{sec:precond:analysis} though, since it isn't known if the dual Hessian possesses a block Toeplitz structure like the primal Hessian.

\balance
\bibliography{main}

\end{document}